\numberwithin{equation}{section}
\numberwithin{figure}{section}
  \theoremstyle{plain}
  \newtheorem*{thm*}{\protect\theoremname}
\theoremstyle{plain}
\newtheorem{thm}{\protect\theoremname}[section]
  \theoremstyle{plain}
  \newtheorem{prop}[thm]{\protect\propositionname}
  \theoremstyle{definition}
  \newtheorem{defn}[thm]{\protect\definitionname}
  \theoremstyle{remark}
  \newtheorem{rem}[thm]{\protect\remarkname}
  \theoremstyle{plain}
  \newtheorem{lem}[thm]{\protect\lemmaname}
  \theoremstyle{plain}
  \newtheorem{cor}[thm]{\protect\corollaryname}
  \theoremstyle{definition}
  \newtheorem{example}[thm]{\protect\examplename}
  \providecommand{\corollaryname}{Corollary}
  \providecommand{\definitionname}{Definition}
  \providecommand{\examplename}{Example}
  \providecommand{\lemmaname}{Lemma}
  \providecommand{\propositionname}{Proposition}
  \providecommand{\remarkname}{Remark}
  \providecommand{\theoremname}{Theorem}
\providecommand{\theoremname}{Theorem}
\begin{document}

\title{On the constancy regions for mixed test ideals}

\author{Felipe Pérez }
\begin{abstract}
In this note we study the partition of $\mathbb{R}_{\geq0}^{n}$ given
by the regions where the mixed test ideals $\tau(\mathfrak{a}_{1}^{t_{1}}...\mathfrak{a}_{n}^{t_{n}})$
are constant. We show that each region can be described as the preimage
of a natural number under a $p$-fractal function $\varphi:\mathbb{R}_{\geq0}^{n}\rightarrow\mathbb{N}$.
In addition, we give some examples illustrating that these regions
do not need to be composed of finitely many rational polytopes. 
\end{abstract}
\maketitle

\section{Introduction}

\newcommandx\mf[1][usedefault, addprefix=\global, 1=]{\mathfrak{#1}}

\newcommandx\bs[1][usedefault, addprefix=\global, 1=]{\boldsymbol{#1}}

In this note, we study the dependence of mixed test ideals on parameters,
and show that the emerging picture is quite different from that in
the case of mixed multiplier ideals in characteristic zero. 

Multiplier ideals have been intensively studied over the last two
decades, as they play an important role in birational geometry, see
for example \cite{Laz}. Given a smooth complex variety $X$ and a
nonzero ideal sheaf $\mathfrak{a}$, one can define for any parameter
$c>0$ an ideal $\mathcal{J}(\mathfrak{a}^{c})$, called multiplier
ideal. This ideal is described via a log resolution $\pi:X'\rightarrow X$
of the pair $(X,\mathfrak{a})$, i.e. a proper birational map, with
$X'$ smooth, and such that $\mathfrak{a}\mathcal{O}_{X'}=\mathcal{O}_{X'}(-E)$,
where $E$ is a simple normal crossing divisor. Then, 
\begin{equation}
\mathcal{J}(\mathfrak{a}^{c}):=\pi_{*}\mathcal{O}(K_{X'/X}-\lfloor cE\rfloor),
\end{equation}
where $K_{X'/X}$ is the relative canonical divisor. 

$\textit{Mixed multiplier ideals}$ extend the previous definition
to the case of several ideals: for nonzero ideals $\mathfrak{a}_{1},\ldots,\mathfrak{a_{n}}$
and positive numbers $c_{1},\ldots,c_{n}$ we take a log resolution
for the pair $(X,\mathfrak{a}_{1}\cdots\mathfrak{a}_{n})$ and set
the mixed multiplier ideal to be

\[
\mathcal{J}(\mathfrak{a}_{1}^{c_{1}}\cdots\mathfrak{a}_{n}^{c_{n}}):=\pi_{*}\mathcal{O}(K_{X'/X}-\lfloor c_{1}E_{1}+\ldots+c_{n}E_{n}\rfloor),
\]
where $\mathcal{O}_{X'}(-E_{i})=\mathfrak{a}_{i}\mathcal{O}_{X'}$. 

Test ideals were introduced by Hara and Yoshida in \cite{HY} as an
analogue of multiplier ideals in positive characteristic. One question
that was studied since \cite{HY} is which properties of multiplier
ideals have analogues for test ideals. For example, for multiplier
ideals the $\textit{jumping}$ $\textit{numbers}$ of $\mf[a]$ are
defined as the positive real numbers $c$ such that $\mathcal{J}(\mf[a]^{c})\not=\mathcal{J}(\mf[a]^{c-\epsilon})$
for every $\epsilon>0$ (cf. \cite{ELSV}). It is easy to see from
the definition (1.1) that for each $\mf[a]$ these numbers are discrete
and rational. Thus it was expected that this was the case also in
positive characteristic. Blickle, Musta\c{t}\u{a} and Smith proved
discreteness and rationality of the analogous positive characteristic
invariants in \cite{BMS}, but the proof was more involved.

In the mixed multiplier ideal setting, it follows from the above description
in terms of a log resolution that for every $b_{1},\ldots,b_{n}$
the region 
\[
\{(c_{1},\ldots,c_{n})\in\mathbb{R}_{\geq0}^{n}|\, c_{i}\leq b_{i}\mbox{ for all }i\}
\]
can be decomposed in a finite set of rational polytopes with nonoverlapping
interiors, such that on the interior of each face of each polytope
the mixed multiplier ideal $\mathcal{J}(\mathfrak{a}_{1}^{c_{1}}\cdots\mathfrak{a}_{n}^{c_{n}})$
is constant. It was expected that in the positive characteristic setting
we would have a similar picture.

In the present note we prove that this is not the case, but we can
still get a nice decomposition. This decomposition depends on a $p$-fractal
function, that is, a function $\varphi:\mathbb{R}_{\geq0}^{n}\rightarrow\mathbb{N}$
satisfying the following property. If we restrict $\varphi$ to a
bounded domain $D$, then the vector space generated by the functions
$\phi(t_{1},\ldots,t_{n})=\varphi((t_{1}+b_{1})/p^{e},\ldots,(t_{n}+b_{n})/p^{e})$
with $b_{i}$ integers and $((t_{1}+b_{1})/p^{e},\ldots,(t_{n}+b_{n})/p^{e})\in D$
, is finite dimensional (Definition \ref{Def Fractal function}).
Explicitly, we show:
\begin{thm*}
$[$Theorem \ref{thm:Main Theorem}$]$ For an $F$-finite, regular
ring $R$ essentially of finite type over a finite field of positive characteristic
and non zero ideals $\mathfrak{a}_{1},\ldots,\mathfrak{a}_{n}$ of
$R$, there is a $p$-fractal function $\varphi:\mathbb{R}_{\geq0}^{n}\rightarrow\mathbb{N}$
such that
\[
\tau(\mathfrak{a}_{1}^{c_{1}}...\mathfrak{a}_{n}^{c_{n}})=\tau(\mathfrak{a}_{1}^{d_{1}}...\mathfrak{a}_{n}^{d_{n}})\Longleftrightarrow\varphi(c_{1},\ldots,c_{n})=\varphi(d_{1},\ldots,d_{n}),
\]
 and therefore the constancy regions are of the form $\varphi^{-1}(i)$
for $i\in\mathbb{N}$.
\end{thm*}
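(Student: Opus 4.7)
My plan is to first define $\varphi$ tautologically and then verify the $p$-fractal property using the self-similarity of test ideals under Frobenius. Since $R$ is essentially of finite type over a finite field, $R$ itself is countable, hence so is its set of ideals, and in particular the collection $\{\tau(\mathfrak{a}_1^{c_1}\cdots\mathfrak{a}_n^{c_n}) : (c_1,\ldots,c_n) \in \mathbb{R}_{\geq 0}^n\}$ admits an injection $\sigma$ into $\mathbb{N}$. I set $\varphi(c_1,\ldots,c_n) := \sigma(\tau(\mathfrak{a}_1^{c_1}\cdots\mathfrak{a}_n^{c_n}))$, so the biconditional in the theorem is built into the definition.

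The substantive step is showing $\varphi$ is $p$-fractal. The key structural ingredient is the Frobenius scaling identity
\[
\tau(\mathfrak{a}_1^{c_1/p^e}\cdots\mathfrak{a}_n^{c_n/p^e}) = \tau(\mathfrak{a}_1^{c_1}\cdots\mathfrak{a}_n^{c_n})^{[1/p^e]},
\]
which follows from the defining formula $\tau(\cdots) = \bigcup_f (\mathfrak{a}_1^{\lceil p^f c_1\rceil}\cdots)^{[1/p^f]}$ by reindexing $f$. Together with the approximation $\tau(\mathfrak{a}_1^{c_1}\cdots\mathfrak{a}_n^{c_n}) = (\mathfrak{a}_1^{\lceil p^e c_1\rceil}\cdots\mathfrak{a}_n^{\lceil p^e c_n\rceil})^{[1/p^e]}$ for $e$ large enough, and Noetherianity of $R$, this implies that on any bounded domain $D \subseteq \mathbb{R}_{\geq 0}^n$ only finitely many mixed test ideals appear.

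Fixing such a bounded $D$ and writing $\mathbf{t} = (t_1,\ldots,t_n)$, $\mathbf{b} = (b_1,\ldots,b_n)$, the scaling identity gives
\[
\phi_{\mathbf{b}, e}(\mathbf{t}) = \sigma\bigl(\tau(\mathfrak{a}_1^{t_1+b_1}\cdots\mathfrak{a}_n^{t_n+b_n})^{[1/p^e]}\bigr),
\]
so each $\phi_{\mathbf{b}, e}$ depends on $(\mathbf{b}, e)$ only through the translated test-ideal function $\mathbf{t} \mapsto \tau(\mathfrak{a}_1^{t_1+b_1}\cdots\mathfrak{a}_n^{t_n+b_n})$ and the exponent $e$. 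The domain constraint $(\mathbf{t}+\mathbf{b})/p^e \in D$ limits $\mathbf{b}/p^e$ to a bounded range; using Skoda-type reductions available in regular rings (as in Blickle--Musta\c t\u a--Smith), $\tau(\mathfrak{a}_1^{t_1+b_1}\cdots\mathfrak{a}_n^{t_n+b_n})$ factors as a fixed monomial in the $\mathfrak{a}_i$ times a test ideal whose exponents remain in a bounded region, reducing the analysis to finitely many ``prototype'' functions.

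The main obstacle is the final step: promoting a finite list of prototypes to a finite-dimensional $\mathbb{Q}$-linear span of the $\phi_{\mathbf{b}, e}$. I expect this to go by decomposing each $\phi_{\mathbf{b}, e}$ into characteristic functions of its level sets and observing that these level sets, being preimages under the affine maps $\mathbf{t} \mapsto (\mathbf{t}+\mathbf{b})/p^e$ of the constancy regions of $\varphi$ on a larger bounded region, form a finite family up to the affine transformations involved. The span of the resulting characteristic functions on $D$ is then finite-dimensional. Articulating this last step cleanly---in particular the interplay between the $[\cdot]^{[1/p^e]}$ operation and the characteristic-function decomposition---is the technical heart of the proof.
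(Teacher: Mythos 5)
Your setup and ingredients are essentially those of the paper --- the tautological definition of $\varphi$ via an enumeration of test ideals, the Frobenius scaling identity $\tau(\mathfrak{a}^{\boldsymbol{c}/p^{e}})=\tau(\mathfrak{a}^{\boldsymbol{c}})^{[1/p^{e}]}$ (Lemma \ref{lemma for the main theorem}), the finiteness of test ideals on bounded regions (Proposition \ref{Finite many test ideals}), and a Skoda-type reduction (Theorem \ref{thm:(Skoda's-Theorem)}) --- but you stop precisely at the point you yourself flag as the ``technical heart,'' and that gap is genuine, not a routine verification.

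The difficulty with your final step is structural. You propose decomposing each $T_{p^{e}|\boldsymbol{b}}\varphi$ into characteristic functions of its level sets (shifted constancy regions), but constancy regions are \emph{not} stable under the operators $T_{q|\boldsymbol{b}}$: because $I\mapsto I^{[1/p^{e}]}$ is far from injective, the level sets of $T_{p^{e}|\boldsymbol{b}}\varphi$ are \emph{unions} of translated constancy regions, and controlling which unions arise as $e$ and $\boldsymbol{b}$ vary is exactly what needs a new idea. Observing there are finitely many ``prototype'' constancy regions on a bounded domain does not bound the dimension of the span of the $T_{q|\boldsymbol{b}}\varphi$, since each operator rearranges those prototypes in an a priori new way.

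The paper resolves this by changing the basic building blocks. Instead of the characteristic functions $\eta_{\boldsymbol{c}}$ of constancy regions, it works with the functions $\chi_{\mathfrak{a}}^{I}$, the characteristic functions of the downward-closed sets $B^{I}(\mathfrak{a})=\{\boldsymbol{c}:\tau(\mathfrak{a}^{\boldsymbol{c}})\not\subseteq I\}$ (Definitions \ref{Def: Approximation sets}--\ref{Def characteristic function}, Lemma \ref{lem:Description of the approximation}). These \emph{are} nearly stable under scaling: Lemma \ref{characterisct functions fractal behavior} shows
\[
T_{p^{e}|\boldsymbol{b}}\chi_{\mathfrak{a}}^{I}=T_{p^{0}|(\boldsymbol{l}-\boldsymbol{1})}\chi_{\mathfrak{a}}^{(I^{[p^{e}]}:\mathfrak{a}^{\boldsymbol{b}-\boldsymbol{l}+\boldsymbol{1}})},
\]
i.e.\ a Frobenius contraction becomes a plain translation of another $\chi_{\mathfrak{a}}^{J}$, with $J$ a colon ideal. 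Combined with Lemma \ref{lem:In-each-hypercube} (finitely many $\chi_{\mathfrak{a}}^{I}$ on each hypercube), this gives finite dimensionality directly. One then recovers each $\eta_{\boldsymbol{c}}$ as a polynomial in the $\chi_{\mathfrak{a}}^{I}$ via Remark \ref{Regions}, namely $\eta_{\boldsymbol{c}}=\chi_{\mathfrak{a}}^{I_{1}}\cdots\chi_{\mathfrak{a}}^{I_{d}}-\chi_{\mathfrak{a}}^{J}$ on a hypercube, and finally takes a locally finite $\mathbb{N}$-linear combination of the $\eta_{\boldsymbol{c}_{i}}$ to get $\varphi$. The identity above, encoding the interplay between $[\,\cdot\,]^{[1/p^{e}]}$, colon ideals, and Skoda's theorem, is exactly the missing ingredient; without it, or some equivalent closure property for your family of level-set indicator functions, your outline does not close.
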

Roughly speaking, this shows that each constancy region has a $p$-fractal
structure that, as we see in the examples in Section 5, can be intricate. 

This note is structured as follows. In section 2 we recall the definition
of test ideals and mixed test ideals following \cite{BMS} and state
some of the theorems that were proved there. In section 3 we give
our main definitions and deduce some basic consequences of these definitions.
We prove our main theorem in section 4. In the last section, we give
an example of a constancy region that is not a finite union of polyhedral
regions.

\section*{Acknowledgments}

I would like to thank Mircea Musta\c{t}\u{a} for introducing me to
this problem and for guiding my work. I would also like to thank Angélica
Benito, Luis Nuñez and Axel St\"{a}bler for comments and suggestions on earlier drafts.
Finally, I would like to thank Rui Huang for doing the graphs in section
5.

\section{Preliminaries}

Recall that a ring $R$ of positive characteristic is $\emph{F-finite}$
if the Frobenius morphism $F:R\rightarrow R$ is finite. Throughout
this note we let $R$ be a regular ring essentially of finite type
over an $F$-finite field $k$ of positive characteristic $p$. In
particular, $R$ is $F$-finite as well.We now recall the basic definitions
and properties related to test ideals and refer to \cite{BMS} for
proofs and details.

Given an ideal $\mf[b]$ in $R$, we denote by $\mathfrak{b}^{[1/p^{e}]}$
the smallest ideal $\mathfrak{J}$ such that $\mathfrak{b}\subseteq\mathfrak{J}^{[p^{e}]}:=(f^{p^{e}}|f\in\mathfrak{J})$.
The existence of a smallest such ideal is a consequence of the flatness
of the Frobenius map in the regular case. The following proposition
gives an explicit description of $\mf[b]^{[1/p^{e}]}$ when $R$ is
free over $R^{p^{e}}$.
\begin{prop}
\label{thm: root ideals}\cite[Proposition 2.5]{BMS} Suppose that
$R$ is free over $R^{q}$, for $q=p^{e}$, and let $e_{1},\ldots,e_{N}$
be a basis of $R$ over $R^{q}$. If $h_{1},\ldots,h_{n}$ are generators
of an ideal $\mathfrak{b}$ of $R$, and if for every $i=1,\ldots,n$
we write 
\[
h_{i}=\sum_{j=1}^{N}a_{i,j}^{q}e_{j}
\]
with $a_{i,j}\in R$, then 
\[
\mathfrak{b}^{[1/p^{e}]}=(a_{i,j}|i\leq n\mbox{ and }j\leq N).
\]

\end{prop}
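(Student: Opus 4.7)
The plan is to establish two inclusions characterizing the smallest ideal. Writing $\mathfrak{J} := (a_{i,j} \mid i \leq n, \, j \leq N)$, I would first verify that $\mathfrak{b} \subseteq \mathfrak{J}^{[p^e]}$, and then show that any ideal $\mathfrak{I}$ satisfying $\mathfrak{b} \subseteq \mathfrak{I}^{[p^e]}$ must contain $\mathfrak{J}$. Combining the two gives that $\mathfrak{J}$ is the unique smallest ideal with the required property, hence equal to $\mathfrak{b}^{[1/p^e]}$.

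The first containment is immediate: by the given expression $h_i = \sum_j a_{i,j}^q e_j$, each generator $h_i$ is an $R$-linear combination of the $q$-th powers $a_{i,j}^q \in \mathfrak{J}^{[p^e]}$, so $h_i \in \mathfrak{J}^{[p^e]}$ and thus $\mathfrak{b}\subseteq \mathfrak{J}^{[p^e]}$. For minimality, fix $\mathfrak{I}=(g_1,\ldots,g_m)$ with $\mathfrak{b}\subseteq \mathfrak{I}^{[p^e]}=(g_1^q,\ldots,g_m^q)$; I would pick expressions $h_i = \sum_k r_{i,k} g_k^q$ and then expand each coefficient $r_{i,k} = \sum_j c_{i,k,j}^q e_j$ using the hypothesis that $e_1,\ldots,e_N$ is an $R^q$-basis of $R$. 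Substituting into the expression for $h_i$ and applying Frobenius additivity $\sum_k u_k^q = \bigl(\sum_k u_k\bigr)^q$ (valid in characteristic $p$) yields
\[
h_i \;=\; \sum_j \Bigl(\sum_k c_{i,k,j}\, g_k\Bigr)^{\!q} e_j.
\]
Comparing this with the given basis expansion $h_i = \sum_j a_{i,j}^q e_j$ and invoking uniqueness of the representation in the $R^q$-basis forces $a_{i,j}^q = \bigl(\sum_k c_{i,k,j} g_k\bigr)^q$; since $R$ is reduced (being regular), Frobenius is injective, so $a_{i,j} = \sum_k c_{i,k,j} g_k \in \mathfrak{I}$, and therefore $\mathfrak{J}\subseteq \mathfrak{I}$.

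The only delicate point is the use of the uniqueness of the basis expansion, which is precisely where the freeness hypothesis on $R$ over $R^q$ enters (a mere generating set would not suffice). The rest is a formal Frobenius manipulation, and the argument makes no use of specific features of the generators $h_i$ or $g_k$, so it works in full generality under the stated assumption. I expect no real technical obstacle beyond being careful with indices in the double expansion step.
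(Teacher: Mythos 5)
Your proof is correct. The paper itself gives no argument for this proposition and simply cites \cite[Proposition 2.5]{BMS}; the argument you supply is the standard one given there. Both halves are sound: $\mathfrak{b}\subseteq\mathfrak{J}^{[p^e]}$ falls out of the defining expansion of the $h_i$, and minimality follows by expanding the coefficients $r_{i,k}$ in the $R^q$-basis, regrouping via Frobenius additivity, and invoking uniqueness of basis coordinates followed by injectivity of Frobenius ($R$ regular, hence reduced) to strip the $q$-th power. You also correctly single out the two places where hypotheses genuinely bite: freeness is what licenses the coordinate comparison, and reducedness is what allows the cancellation $a_{i,j}^q=(\sum_k c_{i,k,j}g_k)^q\Rightarrow a_{i,j}=\sum_k c_{i,k,j}g_k$ (equivalently, $(a_{i,j}-\sum_k c_{i,k,j}g_k)^q=0$ forces the difference to vanish). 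The one thing left tacit is that $\mathfrak{I}$ may be taken finitely generated, which is automatic since $R$ is Noetherian; this is harmless but worth a word.
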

Test ideals were introduced by Hochster and Huneke \cite{HH} as a
tool in their tight closure theory , and were later generalized by
Hara and Yoshida \cite{HY} in the context of pairs $(R,\mathfrak{a}^{c})$,
where $\mathfrak{a}$ is an ideal in $R$ and $c$ is a real parameter.
Blickle, Musta\c{t}\u{a}, and Smith \cite{BMS} gave an elementary
description of these ideals in the case of a regular $F$-finite ring
$R$ . It is this description which we take as our definition.
\begin{defn}
Given a non-negative number $c$ and a nonzero ideal $\mathfrak{a}$,
we define the $\textit{generalized}$ $\textit{test ideal}$ $\textit{of \ensuremath{\mf[a]\:}with exponent}$
$c$ to be 
\[
\tau(\mathfrak{a}^{c})=\bigcup_{e>0}(\mathfrak{a}^{\lceil cp^{e}\rceil})^{[1/p^{e}]},
\]
where $\lceil c\rceil$ stands for the smallest integer $\geq c.$ 
\end{defn}
The ideals in the above union form an increasing chain of ideals;
therefore as $R$ is Noetherian, they stabilize. Hence for $e$ large
enough $\tau(\mathfrak{a}^{c})=(\mathfrak{a}^{\lceil cp^{e}\rceil})^{[1/p^{e}]}.$
In the principal ideal case we can say more.
\begin{prop}
\label{Computing test ideals for principal ideals}\cite[Lemma 2.1]{BMS2}
If $\lambda=\frac{m}{p^{e}}$ for some positive integer $m$, then
$\tau(f^{\lambda})=(f^{m})^{[1/p^{e}]}.$
\end{prop}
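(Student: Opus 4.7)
The plan is to show that the defining ascending chain
\[
(f^{\lceil \lambda p^{e'} \rceil})^{[1/p^{e'}]}, \quad e' \geq 1,
\]
stabilizes precisely at $e' = e$, with limiting value $(f^{m})^{[1/p^{e}]}$. Since the chain is increasing (as noted in the excerpt just before Proposition \ref{Computing test ideals for principal ideals}), it suffices to compute the term for each $e' \geq e$.

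First I would do the easy bookkeeping with ceilings. For $e' = e + s$ with $s \geq 0$, we have $\lambda p^{e'} = (m/p^{e}) \cdot p^{e+s} = m p^{s}$, which is already a non-negative integer, so $\lceil \lambda p^{e'} \rceil = m p^{s}$. Hence the $e'$-th term of the union equals
\[
(f^{m p^{s}})^{[1/p^{e+s}]} = \bigl( (f^{m})^{[p^{s}]} \bigr)^{[1/p^{e+s}]},
\]
where I used that for a principal ideal $(g)$ one has $(g)^{[p^{s}]} = (g^{p^{s}})$.

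The key step is then a general identity valid for any ideal $J$ in our $F$-finite regular ring $R$, namely
\[
\bigl( J^{[p^{s}]} \bigr)^{[1/p^{e+s}]} = J^{[1/p^{e}]}.
\]
To prove this, I would use the universal property of the $[1/p^{?}]$ operation together with the flatness of Frobenius. An ideal $\mathfrak{J}$ contains $J^{[1/p^{e}]}$ if and only if $J \subseteq \mathfrak{J}^{[p^{e}]}$. Applying Frobenius $s$ times (and using that $I \subseteq K$ iff $I^{[p^{s}]} \subseteq K^{[p^{s}]}$ in the regular case, since Frobenius is flat and in particular faithfully flat on ideals, so $K \mapsto K^{[p^{s}]}$ reflects inclusions), this is equivalent to $J^{[p^{s}]} \subseteq \mathfrak{J}^{[p^{e+s}]}$, which is the condition defining $(J^{[p^{s}]})^{[1/p^{e+s}]} \subseteq \mathfrak{J}$. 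Taking $\mathfrak{J}$ to be each of the two ideals in turn yields the equality.

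Combining the two steps, for every $e' \geq e$ the $e'$-th term of the defining union equals $(f^{m})^{[1/p^{e}]}$. Since the chain is monotone increasing and stabilizes (the paper remarks that it does, by noetherianity), we conclude $\tau(f^{\lambda}) = (f^{m})^{[1/p^{e}]}$. The main obstacle, and the only nontrivial input, is the identity $(J^{[p^{s}]})^{[1/p^{e+s}]} = J^{[1/p^{e}]}$; everything else is just a ceiling computation and the monotonicity already recorded. Once that identity is isolated as a lemma (it is essentially a restatement of flatness of Frobenius dual to Proposition \ref{thm: root ideals}), the proposition follows immediately.
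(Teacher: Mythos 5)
Your proof is correct. The paper supplies no argument of its own for this statement---it simply cites \cite[Lemma 2.1]{BMS2}---so there is no internal proof to compare against; what you have written is the detailed version of the standard argument. The ceiling bookkeeping is right: $\lceil \lambda p^{e+s}\rceil = mp^{s}$ is already an integer for every $s\ge 0$, and for a principal ideal $(f^{m})^{[p^{s}]}=(f^{mp^{s}})$. The key identity $\bigl(J^{[p^{s}]}\bigr)^{[1/p^{e+s}]} = J^{[1/p^{e}]}$ is exactly \cite[Lemma 2.4]{BMS}, which the paper invokes later in the proof of Lemma \ref{lemma for the main theorem}; your proof of it, characterizing both sides by the same collection of ideals $\mathfrak{J}$ containing them, with the middle equivalence $J\subseteq \mathfrak{J}^{[p^{e}]} \iff J^{[p^{s}]}\subseteq \mathfrak{J}^{[p^{e+s}]}$ supplied by faithful flatness of Frobenius on the regular ring $R$, is correct and is the natural route. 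Combined with the paper's prior observation that the defining chain is increasing and stabilizes, the conclusion $\tau(f^{\lambda})=(f^{m})^{[1/p^{e}]}$ follows.
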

It can be shown that as the parameter $c$ varies over the reals,
only countably many different test ideals appear; moreover, we have:
\begin{thm}
\cite[Proposition 2.14]{BMS} For every nonzero ideal $\mf[a]$ and
every non-negative number $c$, there exists $\epsilon>0$ such that
$\tau(\mf[a]^{c})=\tau(\mf[a]^{c'})$ for $c<c'<c+\epsilon$.\end{thm}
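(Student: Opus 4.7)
\smallskip

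The plan is to prove the right-continuity of $c \mapsto \tau(\mathfrak{a}^c)$, i.e., to exhibit $\epsilon > 0$ with $\tau(\mathfrak{a}^{c'}) = \tau(\mathfrak{a}^c)$ on $(c, c + \epsilon)$. One inclusion is automatic: for $c' > c$ one has $\lceil c' p^e \rceil \geq \lceil c p^e \rceil$, so $(\mathfrak{a}^{\lceil c' p^e \rceil})^{[1/p^e]} \subseteq (\mathfrak{a}^{\lceil c p^e \rceil})^{[1/p^e]}$ by monotonicity of $(\cdot)^{[1/p^e]}$, whence $\tau(\mathfrak{a}^{c'}) \subseteq \tau(\mathfrak{a}^c)$. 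The substance is the reverse inclusion.

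The next step is to stabilize the defining union. Because $\tau(\mathfrak{a}^c)$ is finitely generated (the ring is Noetherian) and each generator belongs to $(\mathfrak{a}^{\lceil c p^e \rceil})^{[1/p^e]}$ for some $e$, taking the maximum over finitely many generators yields $e_0$ with $\tau(\mathfrak{a}^c) = (\mathfrak{a}^N)^{[1/p^{e_0}]}$, where $N = \lceil c p^{e_0} \rceil$.

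The core estimate is then: for every $e \geq e_0$ and every integer $M$ with $M \leq N p^{e - e_0}$, one has $(\mathfrak{a}^M)^{[1/p^e]} \supseteq \tau(\mathfrak{a}^c)$. Indeed, $\mathfrak{a}^M \supseteq \mathfrak{a}^{N p^{e-e_0}} = (\mathfrak{a}^N)^{p^{e-e_0}} \supseteq (\mathfrak{a}^N)^{[p^{e-e_0}]}$, and applying $[1/p^e]$ together with the identity $\bigl((\cdot)^{[p^{e-e_0}]}\bigr)^{[1/p^e]} = (\cdot)^{[1/p^{e_0}]}$ delivers the claim. Taking $M = \lceil c' p^e \rceil$, this gives $\tau(\mathfrak{a}^{c'}) \supseteq \tau(\mathfrak{a}^c)$ whenever $c' \leq N/p^{e_0}$, so $\epsilon := N/p^{e_0} - c$ works \emph{provided} $c p^{e_0}$ is not an integer, in which case $N > c p^{e_0}$.

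The main obstacle is the dyadic case $c = m/p^k$, for then $c p^{e_0}$ is an integer at every stabilization level $e_0 \geq k$ and the above $\epsilon$ collapses to zero. To finish, I would analyze the shifted chain $\{(\mathfrak{a}^{N_e + 1})^{[1/p^e]}\}_e$ with $N_e = \lceil c p^e \rceil$; this chain is increasing in $e$ by the same Frobenius-swallowing trick ($\mathfrak{a}^{N_{e+1}+1} \supseteq \mathfrak{a}^{p(N_e+1)} \supseteq (\mathfrak{a}^{N_e+1})^{[p]}$), hence stabilizes by Noetherianity to some ideal $L \subseteq \tau(\mathfrak{a}^c)$. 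The key remaining step, which I expect to be the genuine technical point, is to show $L = \tau(\mathfrak{a}^c)$; granting this, choosing $e$ past the stabilization of both chains and setting $\epsilon = 1/p^e$ forces $\lceil c' p^e \rceil \in \{N_e, N_e + 1\}$ for $c < c' < c + \epsilon$, and in either case $(\mathfrak{a}^{\lceil c' p^e \rceil})^{[1/p^e]} \supseteq \tau(\mathfrak{a}^c)$, completing the proof.
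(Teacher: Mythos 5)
Your setup is sound and in fact runs parallel to the argument in \cite[Proposition 2.14]{BMS} and to its multivariable analogue that the paper proves as Proposition \ref{Constancy in positive directions}: the easy inclusion $\tau(\mathfrak{a}^{c'})\subseteq\tau(\mathfrak{a}^c)$ for $c'>c$, stabilization of the defining chain, the ``Frobenius-swallowing'' containments, and the reduction to the case $c=m/p^k$ where $cp^{e_0}$ is an integer and a naive $\epsilon$ collapses. But you stop exactly at the load-bearing step: you define the shifted chain $(\mathfrak{a}^{N_e+1})^{[1/p^e]}$, show it increases and stabilizes to some $L\subseteq\tau(\mathfrak{a}^c)$, and then write ``the key remaining step\ldots is to show $L=\tau(\mathfrak{a}^c)$'' without proving it. That is not a detail you can defer; it is the content of the theorem in the dyadic case, and it does not follow from anything you have written.

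The missing argument is a tight closure argument, and it is worth knowing it explicitly because it cannot be replaced by pure ceiling/Frobenius bookkeeping. Fix $e$ past stabilization of both chains, so $\tau(\mathfrak{a}^c)=(\mathfrak{a}^{N_e})^{[1/p^e]}$ and $L=(\mathfrak{a}^{N_{e'}+1})^{[1/p^{e'}]}$ for all $e'\geq e$. To get $\tau(\mathfrak{a}^c)\subseteq L$, it is enough to show $\mathfrak{a}^{N_e}\subseteq L^{[p^e]}$. Take $u\in\mathfrak{a}^{N_e}$ and any nonzero $v\in\mathfrak{a}$. For $e'\geq e$ one has, using $N_{e'}=p^{e'-e}N_e$ in the dyadic case,
\[
v\,u^{p^{e'-e}}\in\mathfrak{a}^{p^{e'-e}N_e+1}=\mathfrak{a}^{N_{e'}+1}\subseteq L^{[p^{e'}]}=\bigl(L^{[p^e]}\bigr)^{[p^{e'-e}]}.
\]
This says precisely that $u$ lies in the tight closure of $L^{[p^e]}$, and since $R$ is regular every ideal is tightly closed (Hochster--Huneke), so $u\in L^{[p^e]}$. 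That closes the gap. Without invoking regularity in this way (or some equivalent input, e.g.\ flatness of Frobenius packaged as the colon identity used in Skoda's theorem), the inclusion $\tau(\mathfrak{a}^c)\subseteq L$ is genuinely unavailable, so as written your proof is incomplete.
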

\begin{defn}
A positive real number $c$ is an $\textit{F-jumping exponent}$ of
$\mf[a]$ if $\tau(\mf[a]^{c})\neq\tau(\mf[a]^{c-\epsilon})$ for
all $\epsilon>0$
\end{defn}
The $F$-jumping exponents of an ideal $\mf[a]$ form a discrete set
of rational numbers, that is, there are no accumulation points of
this set. In fact, they form a sequence with limit infinity (see \cite[Theorem 3.1]{BMS}).

As in the case of one ideal, one can define the mixed test ideal of
several ideals as follows. 
\begin{defn}
\label{Def mixed test ideals}Given nonzero ideals $\mf[a]_{1},...,\mf[a]_{n}$
of $R$ and non-negative real numbers $c_{1},...,c_{n}$, we define
the $\textit{mixed generalized test ideal with exponents }c_{1},\ldots,c_{n}$
as:
\[
\tau(\mathfrak{a}_{1}^{c_{1}}\cdots a_{n}^{c_{n}})=\bigcup_{e>0}(\mathfrak{a}_{1}^{\lceil c_{1}p^{e}\rceil}\cdots\mathfrak{a}_{n}^{\lceil c_{n}p^{e}\rceil})^{[1/p^{e}]}.
\]

\end{defn}
As in the case of $\tau(\mathfrak{a}^{c})$, we have $\tau(\mathfrak{a}_{1}^{c_{1}}\cdots\mf[a]_{n}^{c_{n}})=(\mathfrak{a}_{1}^{\lceil c_{1}p^{e}\rceil}\cdots\mathfrak{a}_{n}^{\lceil c_{n}p^{e}\rceil})^{[1/p^{e}]}$
for all $e$ large enough. 
\begin{thm}
\label{thm:bound on the degree generators mixed case} Let $\mf[a]_{1},\ldots,\mf[a]_{n}$
be nonzero ideals in the polynomial ring $R=k[x_{1},\ldots,x_{r}]$,
and let $c_{1}=r_{1}/p^{s},\ldots,c_{n}=r_{n}/p^{s}$ be such that
$r_{1},\ldots,r_{n}$ are natural numbers. If each $\mf[a]_{i}$ can
be generated by polynomials of degree at most $d,$ then the ideal\textup{
$\tau(\mathfrak{a}_{1}^{c_{1}}\cdots\mf[a]_{n}^{c_{n}})$ can be generated
by polynomials of degree at most $\lfloor d(c_{1}+\ldots+c_{n})\rfloor$.
Here $\lfloor r\rfloor$ stands for the biggest integer $\leq r$.}\end{thm}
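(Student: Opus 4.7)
The plan is to fix a single exponent $e = s+e'$ for which the union in Definition~\ref{Def mixed test ideals} has stabilized, and then push the obvious degree bound on a power-product ideal through the $[1/p^e]$ operation via Proposition~\ref{thm: root ideals}. Since $c_i p^s = r_i \in \mathbb{N}$, for every $e'\geq 0$ we have $\lceil c_i p^{s+e'}\rceil = r_i p^{e'}$, so for $e'$ sufficiently large
$$\tau(\mathfrak{a}_1^{c_1}\cdots\mathfrak{a}_n^{c_n}) = \bigl(\mathfrak{a}_1^{r_1 p^{e'}}\cdots\mathfrak{a}_n^{r_n p^{e'}}\bigr)^{[1/p^{s+e'}]}.$$

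The product ideal on the right is generated, by taking products of generators of each $\mathfrak{a}_i$, by polynomials of degree at most $D := d p^{e'}(r_1+\ldots+r_n)$. I then apply Proposition~\ref{thm: root ideals} with $q := p^{s+e'}$, choosing the natural $R^q$-basis consisting of elements $\lambda x^\beta$ where $\lambda$ runs over a $k^q$-basis of $k$ and $0 \leq \beta_j < q$. Expanding each generator $h$ of degree $\leq D$ in this basis and grouping monomials by the residue of their exponent modulo $q$ shows that each coefficient $a_{\lambda,\beta}$ has only monomials $x^\gamma$ satisfying $q|\gamma| + |\beta| \leq D$, whence
$$\deg(a_{\lambda,\beta}) \leq \lfloor D/q \rfloor = \lfloor d(c_1+\ldots+c_n) \rfloor,$$
which is the desired bound.

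The main technical step is the degree-tracking in Proposition~\ref{thm: root ideals}, which amounts to the general observation that taking $[1/p^e]$ of an ideal generated in degree $\leq D$ yields generators of degree $\leq \lfloor D/p^e\rfloor$. The only mild subtlety is the non-perfect case: when $k$ is $F$-finite but not perfect, one must fold a $k^q$-basis of $k$ into the $R^q$-basis of $R$, but those factors contribute no $x$-degree and the estimate is unaffected. The exponents $e'$ then cancel by design, so that $D/q$ equals $d(c_1+\ldots+c_n)$ exactly before taking the floor.
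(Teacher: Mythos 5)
Your proof is correct and follows the same route as the paper's: fix $e=s+e'$ large enough that $\tau(\mathfrak{a}^{\boldsymbol{c}})=(\mathfrak{a}_1^{r_1p^{e'}}\cdots\mathfrak{a}_n^{r_np^{e'}})^{[1/p^{s+e'}]}$, bound the degrees of generators of the product ideal, and push that bound through the $[1/p^{s+e'}]$-operation via Proposition~\ref{thm: root ideals} and the explicit monomial-times-coefficient basis of $R$ over $R^{p^{s+e'}}$. The only difference is that you spell out the degree-tracking step ($q|\gamma|+|\beta|\le D$ implies $\deg a_{\lambda,\beta}\le\lfloor D/q\rfloor$) that the paper delegates by citing the one-ideal case in [BMS, Prop.~3.2], and your final expression $d(c_1+\ldots+c_n)$ is in fact the correct value (the paper's last display omits a factor of $1/p^s$ by a typo).
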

\begin{proof}
We argue as in \cite[Proposition 3.2]{BMS}, where the result was
proven for the case of one ideal. We know that $R$ is free over $R^{p^{e}}$
with basis 
\[
\{\beta_{j}x_{1}^{\alpha_{1}}\cdots x_{r}^{\alpha_{r}}|0\leq\alpha_{i}<p^{e}\mbox{ and \ensuremath{\beta_{j}}part of a basis for \ensuremath{k}over \ensuremath{k^{p^{e}}}}\}.
\]
The ideal $\mf[a]_{1}^{\lceil p^{e}c_{1}\rceil}\cdots\mf[a]_{n}^{\lceil p^{e}c_{n}\rceil}$
can be generated by polynomials of degree at most $d\lceil p^{e}c_{1}\rceil+\ldots+d\lceil p^{e}c_{n}\rceil$.
Hence taking $e>s$ large enough by Proposition \ref{thm: root ideals}
the ideal 
\[
\tau(\mathfrak{a}_{1}^{c_{1}}\cdots\mf[a]_{n}^{c_{n}})=(\mathfrak{a}_{1}^{\lceil p^{e}c_{1}\rceil}\cdots\mathfrak{a}_{n}^{\lceil p^{e}c_{n}\rceil})^{[1/p^{e}]}
\]
 is generated by polynomials of degree at most $(d\lceil p^{e}c_{1}\rceil+\ldots+d\lceil p^{e}c_{n}\rceil)/p^{e}=(dp^{e-s}r_{1}+\ldots+dp^{e-s}r_{n})/p^{e}=d(r_{1}+...+r_{n})$. 
\end{proof}

\section{Some Sets Associated to mixed test ideals}

In this section we introduce the definitions needed for our study
of mixed test ideals, and derive some basic properties. Recall that
$R$ denotes a regular ring essentially of finite type over an $F$-finite
field $k$ of positive characteristic. 
\begin{rem}
In order to simplify notation we denote $\mf[a]_{1}^{c_{1}}...\mf[a]_{n}^{c_{n}}$
by $\mf[a]^{\bs[c]}$, where $\mf[a]=(\mf[a]_{1},...,\mf[a]_{n})$,
$\bs[c]=(c_{1},...,c_{n})\in\mathbb{R}_{\geq0}^{n}$. We similarly
denote the vector $(\lceil r_{1}\rceil,...,\lceil r_{n}\rceil)$ by
$\lceil\bs[r]\rceil$, where $\bs[r]=(r_{1},\ldots,r_{n})\in\mathbb{R}_{\geq0}^{n}$.\end{rem}
\begin{defn}
\label{Def: Approximation sets}Given nonzero ideals $\mf[a]_{1},\ldots,\mf[a]_{n},$
and $I$ in $R$, we define
\[
V^{I}(\mf[a],p^{e})=\left\{ \frac{1}{p^{e}}\bs[c]=\left(\frac{c_{1}}{p^{e}},\ldots,\frac{c_{n}}{p^{e}}\right)\in\frac{1}{p^{e}}\mathbb{Z}_{\geq0}^{n}|\;\mf[a]^{\bs[c]}\not\subseteq I^{[p^{e}]}\right\} 
\]

and 
\[
B^{I}(\mf[a],p^{e})=\bigcup[0,l_{1}]\times\ldots.\times[0,l_{n}]\subset\mathbb{R}^{n},
\]
 where the union runs over all $(l_{1},\ldots,l_{n})\in V^{I}(\mf[a],p^{e})$.
\end{defn}
From this definition it follows that if $e'\geq e$ then $V^{I}(\mf[a],p^{e})\subseteq V^{I}(\mf[a],p^{e'})$
and $B^{I}(\mf[a],p^{e})\subseteq B^{I}(\mf[a],p^{e'}).$ Indeed,
if $\mf[a]^{\bs[c]}\not\subseteq I^{[p^{e}]}$, then there is an element
$f\in\mf[a]^{\bs[c]}$ with $f\notin I^{[p^{e}]}$, and by the flatness
of the Frobenius morphism we get $f^{p^{e'-e}}\in\mathfrak{a}^{p^{e'-e}\bs[c]}$
but $f^{p^{e'-e}}\notin I^{[p^{e'}]}$. Therefore $\mf[a]^{p^{e'-e}\bs[c]}\not\subseteq I^{[p^{e'}]}$,
hence we get the first inclusion. The second one is then straightforward. 
\begin{defn}
\label{Def characteristic function}Let $B^{I}(\mf[a])=\bigcup_{e>0}B^{I}(\mf[a],p^{e})$
and define $\chi_{\mf[a]}^{I}:\mathbb{R}^{n}\rightarrow\mathbb{N}$
to be the characteristic function of the set $B^{I}(\mf[a])$. That
is, $\chi_{\mf[a]}^{I}(\boldsymbol{c})$ is $1$ if $\boldsymbol{c}$
is in $B^{I}(\mf[a])$ and it is $0$ otherwise. 
\end{defn}
In order to study the sets $B^{I}(\mf[a])$ it is crucial to understand
how they intersect any increasing path. This motivates the following
definition.
\begin{defn}
Let $\mf[a]_{1},\ldots,\mf[a]_{n},$ and $I\not=R$ be nonzero ideals
as before and let $\bs[r]=(r_{1},\ldots,r_{n})\in\mathbb{Z}_{\geq0}^{n}$
be such that that $\mf[a]^{\bs[r]}\subseteq\mbox{rad}(I)$. We denote
\[
V_{\bs[r]}^{I}(\mf[a],p^{e})=\mbox{max}\{m\in\mathbb{Z}_{\geq0}|\mbox{ }\mf[a]^{m\bs[r]}\not\subseteq I^{[p^{e}]}\}.
\]
\end{defn}
\begin{rem}
While in the definition of $V^{I}(\mf[a],p^{e})$ one does not require
any relation between $\mf[a]$ and $I$, observe that we require that
$\mf[a]^{\bs[r]}\subseteq\mbox{rad}(I)$ when we consider $V_{\bs[r]}^{I}(\mf[a],p^{e})$.
\end{rem}
Note that if $\mf[a]^{m\bs[r]}\not\subseteq I^{[p^{e}]}$ then $a^{pm\bs[r]}\not\subseteq I^{[p^{e+1}]}$.
Therefore $pV_{\bs[r]}^{I}(\mf[a],p^{e})\le V_{\bs[r]}^{I}(\mf[a],p^{e+1})$,
hence
\begin{equation}
\left(\frac{V_{\bs[r]}^{I}(\mf[a],p^{e})}{p^{e}}\right)_{e\geq1}\label{Decreasing sequence}
\end{equation}
is a non-decreasing sequence.
\begin{prop}
The sequence \ref{Decreasing sequence} is bounded, hence it has a
limit.\end{prop}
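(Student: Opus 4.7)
The plan is to show the sequence is bounded above by a constant depending only on $N$ (an exponent witnessing $\mf[a]^{N\bs[r]} \subseteq I$) and on the number of generators of $I$; combined with monotonicity already established in (\ref{Decreasing sequence}), this gives convergence.

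First I would use the hypothesis $\mf[a]^{\bs[r]} \subseteq \mathrm{rad}(I)$: since $R$ is Noetherian there exists a positive integer $N$ with $\mf[a]^{N\bs[r]} \subseteq I$, hence $\mf[a]^{Nm\bs[r]} \subseteq I^{m}$ for every $m \geq 1$. The next step is to pass from the ordinary power $I^{m}$ to the Frobenius power $I^{[p^{e}]}$. For this I would invoke the standard pigeonhole argument: if $I$ is generated by $s$ elements $f_{1},\ldots,f_{s}$, then any product of $s(p^{e}-1)+1$ of these generators must contain some $f_{j}$ repeated at least $p^{e}$ times, and therefore lies in $I^{[p^{e}]}$. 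Consequently
\[
I^{\,s(p^{e}-1)+1} \subseteq I^{[p^{e}]}.
\]

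Combining the two inclusions with $m = s(p^{e}-1)+1$ yields $\mf[a]^{N(s(p^{e}-1)+1)\bs[r]} \subseteq I^{[p^{e}]}$. By definition of $V_{\bs[r]}^{I}(\mf[a],p^{e})$ this forces
\[
V_{\bs[r]}^{I}(\mf[a],p^{e}) < N\bigl(s(p^{e}-1)+1\bigr),
\]
and dividing by $p^{e}$ gives $V_{\bs[r]}^{I}(\mf[a],p^{e})/p^{e} < Ns$ for every $e \geq 1$. Since the sequence is non-decreasing by (\ref{Decreasing sequence}) and bounded above, it converges in $\mathbb{R}$.

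The only delicate point is the passage from ordinary to Frobenius powers: one has the obvious inclusion $I^{[p^{e}]} \subseteq I^{p^{e}}$ but needs the reverse-direction containment $I^{m} \subseteq I^{[p^{e}]}$ for some $m$ that grows linearly in $p^{e}$, so that after dividing by $p^{e}$ one still obtains a uniform bound. The pigeonhole estimate with the linear function $m = s(p^{e}-1)+1$ is precisely what makes the argument work; any weaker bound (say, polynomial of higher degree in $p^{e}$) would fail to give boundedness of the normalized sequence.
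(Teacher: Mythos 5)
Your proof is correct and follows essentially the same strategy as the paper: reduce to a uniform linear-in-$p^e$ bound on $V_{\bs[r]}^{I}(\mf[a],p^{e})$ via the Noetherian hypothesis and a pigeonhole argument converting ordinary powers to Frobenius powers, then divide by $p^e$. The only cosmetic difference is that you apply the pigeonhole to the generators of $I$ (giving $I^{s(p^{e}-1)+1}\subseteq I^{[p^{e}]}$), while the paper applies it to the generators of $\mf[a]^{\bs[r]}$ (giving $\mf[a]^{(s(p^{e}-1)+1)\bs[r]}\subseteq(\mf[a]^{\bs[r]})^{[p^{e}]}$); both yield a bound of the same shape.
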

\begin{proof}
If $\mf[a]^{\bs[r]}$ is generated by $s$ elements, then $\mf[a]^{(s(p^{e}-1)+1)\bs[r]}\subseteq(\mf[a]^{\bs[r]})^{[p^{e}]}.$
For $l$ large enough such that $\mf[a]^{l\bs[r]}\subseteq I$, we
have $V_{\bs[r]}^{I}(\mf[a],p^{e})\leq l(s(p^{e}-1)+1)-1$ for all
$e$. Therefore $V_{\bs[r]}^{I}(\mf[a],p^{e})/p^{e}\leq ls$, thus
the sequence is bounded.\end{proof}
\begin{defn}
We call this limit the\textit{ $F$-threshold of $\mathfrak{a}$ associated
to $I$ in direction $\boldsymbol{r}=(r_{1},\ldots,r_{n})$}, and
we denote it by $C_{\bs[r]}^{I}(\mf[a])$.\end{defn}
\begin{rem}
In the case $n=1$ we recover the usual definition of $F$-threshold
\cite{MTW}, \cite[Section 2.5]{BMS}.\end{rem}
\begin{lem}
\label{Contaiment in the positive direction}Let $\frac{1}{p^{e}}\bs[b]=(\frac{b_{1}}{p^{e}},\ldots,\frac{b_{1}}{p^{e}})$
and $\frac{1}{p^{e'}}\bs[c]=(\frac{c_{1}}{p^{e'}},\ldots,\frac{c_{n}}{p^{e'}})$
be two elements in \textup{$\mathbb{R}_{\geq0}^{n}$. If $\frac{b_{i}}{p^{e}}\leq\frac{c_{i}}{p^{e'}}$,
for every $i$, and $e'\leq e$ then $(\mf[a]^{\bs[c]})^{[1/p^{e'}]}\subseteq(\mf[a]^{\bs[b]})^{[1/p^{e}]}$.}\end{lem}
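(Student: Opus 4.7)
The plan is to reduce the stated inclusion to an inclusion between ordinary ideals, to which the order-preserving operation $(-)^{[1/p^e]}$ can then be applied. Set $d = e - e' \geq 0$; the hypothesis $b_i/p^e \leq c_i/p^{e'}$ rewrites as $b_i \leq p^d c_i$, and therefore
\[
\mf[a]^{p^d \bs[c]} = \prod_i \mf[a]_i^{p^d c_i} \subseteq \prod_i \mf[a]_i^{b_i} = \mf[a]^{\bs[b]}.
\]

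Next I would record the general containment $(\mf[a]^{\bs[c]})^{[p^d]} \subseteq \mf[a]^{p^d \bs[c]}$: writing generators of each $\mf[a]_i$ as $f_{i,1},\ldots,f_{i,m_i}$, every generator of $\mf[a]^{\bs[c]}$ is a monomial $\prod f_{i,j}^{\alpha_{i,j}}$ with $\sum_j \alpha_{i,j} = c_i$, and its $p^d$-th power $\prod f_{i,j}^{p^d \alpha_{i,j}}$ visibly lies in $\mf[a]^{p^d \bs[c]}$. Chaining with the previous display yields
\[
(\mf[a]^{\bs[c]})^{[p^d]} \subseteq \mf[a]^{\bs[b]}.
\]

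The key technical ingredient is the identity $(J^{[p^d]})^{[1/p^e]} = J^{[1/p^{e'}]}$, valid for any ideal $J$. This follows from faithful flatness of Frobenius in the regular setting: both sides are defined as the smallest ideal $L$ satisfying a containment, and using $L^{[p^e]} = (L^{[p^{e'}]})^{[p^d]}$ together with the fact that $(-)^{[p^d]}$ reflects inclusions of ideals in a regular ring, the two conditions $J^{[p^d]} \subseteq L^{[p^e]}$ and $J \subseteq L^{[p^{e'}]}$ become equivalent, so the two minimal $L$'s coincide.

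Applying the (order-preserving) operation $(-)^{[1/p^e]}$ to the displayed inclusion and invoking the identity gives
\[
(\mf[a]^{\bs[c]})^{[1/p^{e'}]} = \bigl((\mf[a]^{\bs[c]})^{[p^d]}\bigr)^{[1/p^e]} \subseteq (\mf[a]^{\bs[b]})^{[1/p^e]},
\]
which is the claim. The main obstacle is keeping straight the direction of each inclusion: larger numerical exponents produce \emph{smaller} ideals, one only has $(\cdot)^{[p^d]} \subseteq (\cdot)^{p^d}$ (strict in general), and $(-)^{[1/p^e]}$ is characterized as a \emph{minimum}, so the intermediate inclusion must be arranged to point the correct way before taking $p^e$-th roots.
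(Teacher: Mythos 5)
Your proof is correct and takes essentially the same route as the paper: both reduce $b_i/p^e \leq c_i/p^{e'}$ to $\mathfrak{a}^{p^{e-e'}\boldsymbol{c}} \subseteq \mathfrak{a}^{\boldsymbol{b}}$ and then take $p^e$-th roots, with the remaining work being the containment $(\mathfrak{a}^{\boldsymbol{c}})^{[1/p^{e'}]} \subseteq (\mathfrak{a}^{p^{e-e'}\boldsymbol{c}})^{[1/p^e]}$, which the paper delegates to \cite[Lemma 2.8]{BMS} and which you correctly unpack into the two elementary facts $J^{[p^d]}\subseteq J^{p^d}$ and $(J^{[p^d]})^{[1/p^e]} = J^{[1/p^{e'}]}$ via faithful flatness of Frobenius.
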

\begin{proof}
It follows as in \cite[Lemma 2.8]{BMS}. The condition $b_{i}\leq c_{i}p^{e-e'}$
implies that $\mf[a]_{i}^{b_{i}}\supseteq\mf[a]_{i}^{c_{i}p^{e-e'}}$for
every $i$. Therefore 
\[
(\mf[a]^{\bs[b]})^{[1/p^{e}]}\supseteq(\mf[a]^{p^{e-e'}\bs[c]})^{[1/p^{e}]}\supseteq(\mf[a]^{\bs[c]})^{[1/p^{e'}]}.
\]
 \end{proof}
\begin{prop}
\label{Constancy in positive directions}Given any $\bs[c]=(c_{1},\ldots,c_{n})\in\mathbb{R}_{\geq0}^{n}$,
there is $\bs[\epsilon]=(\epsilon_{1},\ldots,\epsilon_{n})\in\mathbb{R}_{>0}^{n}$
such that for every $\bs[r]=(r_{1},\ldots,r_{n})$ with $0<r_{i}<\epsilon_{i}$,
we have $\tau(\mf[a]^{\bs[c]})=\tau(\mf[a]^{\bs[c]+\bs[r]})$.\end{prop}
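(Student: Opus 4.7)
The inclusion $\tau(\mf[a]^{\bs[c]+\bs[r]})\subseteq\tau(\mf[a]^{\bs[c]})$ is immediate from the definitions: at each level $e$ one has $\lceil(c_i+r_i)p^e\rceil\geq\lceil c_ip^e\rceil$, so $\mf[a]^{\lceil(\bs[c]+\bs[r])p^e\rceil}\subseteq\mf[a]^{\lceil\bs[c]p^e\rceil}$, and taking the directed unions yields the stated containment.

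For the reverse inclusion, I would exploit the fact that the chain defining $\tau(\mf[a]^{\bs[c]})$ stabilizes. Fix $e_0$ with $\tau(\mf[a]^{\bs[c]})=(\mf[a]^{\bs[m]})^{[1/p^{e_0}]}$, where $\bs[m]=\lceil\bs[c]p^{e_0}\rceil$, so that $\bs[m]/p^{e_0}\geq\bs[c]$ componentwise. Given $\bs[r]$ sufficiently small, the strategy is to pick an auxiliary level $e^*\geq e_0$ large enough that $\lceil(c_i+r_i)p^{e^*}\rceil/p^{e^*}\leq m_i/p^{e_0}$ in each coordinate; an application of Lemma~\ref{Contaiment in the positive direction} will then give
\[
(\mf[a]^{\bs[m]})^{[1/p^{e_0}]}\subseteq(\mf[a]^{\lceil(\bs[c]+\bs[r])p^{e^*}\rceil})^{[1/p^{e^*}]}\subseteq\tau(\mf[a]^{\bs[c]+\bs[r]}).
\]
Since $\lceil(c_i+r_i)p^{e^*}\rceil/p^{e^*}$ is non-increasing in $e^*$ and tends to $c_i+r_i$ from above, such an $e^*$ exists whenever $c_i+r_i<m_i/p^{e_0}$, i.e.\ $r_i<m_i/p^{e_0}-c_i$. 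In every coordinate where this gap is strictly positive (equivalently, $c_ip^{e_0}\notin\mathbb{Z}$), this gives a valid $\epsilon_i$ directly.

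The main obstacle is the case where $c_i\in\mathbb{Z}[1/p]$ for some $i$: then $m_i/p^{e_0}=c_i$ exactly and no positive $\epsilon_i$ arises from the ceiling trick above. To deal with these ``boundary'' coordinates I would supplement the argument with a Noetherian stabilization: the family $\{\tau(\mf[a]^{\bs[c]+\bs[r]})\}_{\bs[r]>0}$ is ascending as $\bs[r]\to 0$ componentwise, so it stabilizes, producing some $\bs[\epsilon]>0$ on whose box $0<r_i<\epsilon_i$ the mixed test ideal takes a constant value $J\subseteq\tau(\mf[a]^{\bs[c]})$. The remaining task is to verify $J=\tau(\mf[a]^{\bs[c]})$; I would do this by combining the preceding Lemma~\ref{Contaiment in the positive direction} argument on the non-integer coordinates with the one-variable right-continuity theorem recalled in the preliminaries (the $n=1$ case of the present statement), applied coordinate-by-coordinate and telescoped across intermediate points, so that each integer coordinate is disposed of one at a time.
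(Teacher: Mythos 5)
Your easy inclusion and your treatment of the coordinates where $c_i\notin\mathbb{Z}[1/p]$ are both correct, and you have correctly isolated the genuine difficulty: once some $c_ip^{e}$ is forced to be an integer, the ceiling trick with Lemma~\ref{Contaiment in the positive direction} gives no room, and you are left with the task of proving $\tau(\mf[a]^{\bs[c]})\subseteq J$ where $J$ is the Noetherian-stable value of $\tau(\mf[a]^{\bs[c]+\bs[r]})$ for small $\bs[r]>0$.

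The gap is in how you propose to finish: you cannot dispose of the boundary coordinates by citing the one-variable right-continuity theorem (Theorem 2.4, i.e.\ \cite[Proposition 2.14]{BMS}) ``coordinate-by-coordinate.'' That theorem concerns a single test ideal $\tau(\mf[a]^{c})$ of a single ideal $\mf[a]$; it does not assert that $c_{1}\mapsto\tau(\mf[a]_{1}^{c_{1}}\mf[a]_{2}^{c_{2}}\cdots\mf[a]_{n}^{c_{n}})$ is right-continuous for fixed $c_{2},\ldots,c_{n}$. There is no fixed ideal $\mf[b]$ with $\tau(\mf[a]_{1}^{c_{1}}\mf[a]_{2}^{c_{2}}\cdots\mf[a]_{n}^{c_{n}})=\tau(\mf[b]^{c_{1}})$ in general (Proposition~\ref{Mixed test ideals =00003D normal test ideals} only applies along rays through the origin), so the telescoping step would itself require the multi-variable statement you are trying to prove. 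As written, that step is either circular or simply unsupported.

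The paper fills exactly this hole with a tight-closure argument. Having established that $I:=(\mf[a]^{\bs[r]})^{[1/p^{e}]}$ is constant for $c_{i}<r_{i}/p^{e}<c_{i}+\epsilon_{i}$, one takes $u\in\mf[a]^{\lceil p^{e}\bs[c]\rceil}$ and a nonzero $v\in\mf[a]_{1}\cdots\mf[a]_{n}$, and observes that for $e'\gg e$,
\[
vu^{p^{e'-e}}\in\mf[a]^{p^{e'-e}\lceil p^{e}\bs[c]\rceil+\boldsymbol{1}}\subseteq\mf[a]^{\lceil p^{e'}\bs[c]\rceil+\boldsymbol{1}}\subseteq\left(I^{[p^{e}]}\right)^{[p^{e'-e}]},
\]
so $u$ lies in the tight closure of $I^{[p^{e}]}$, which equals $I^{[p^{e}]}$ since $R$ is regular; hence $\mf[a]^{\lceil p^{e}\bs[c]\rceil}\subseteq I^{[p^{e}]}$ and $\tau(\mf[a]^{\bs[c]})\subseteq I$. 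Inserting the auxiliary $+\boldsymbol{1}$ is what lets one escape the boundary coordinates: $\lceil p^{e'}c_{i}\rceil+1 > p^{e'}c_{i}$ strictly even when $p^{e'}c_{i}\in\mathbb{Z}$. This is the missing ingredient in your sketch; without it (or some substitute for it), the integer-coordinate case is not actually handled.
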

\begin{proof}
We argue as in the proof of \cite[Proposition 2.14]{BMS}. We first
show that there is a vector $\bs[\epsilon]=(\epsilon_{1},\ldots,\epsilon_{n})$,
with $\epsilon_{i}>0$ for all $i$, such that for all vectors $\bs[r]=(r_{1},\ldots,r_{n})\in\mathbb{Z}^{n}$
with $c_{i}<\frac{1}{p^{e}}r_{i}<c_{i}+\epsilon_{i}$ we have that
$(\mf[a]^{\bs[r]})^{[1/p^{e}]}$ is constant. Indeed, otherwise there
are sequences $\bs[r]_{m}=(r_{m,1},\ldots,r_{m,n})\in\mathbb{Z}_{\geq0}^{n}$
and $e_{m}\in\mathbb{Z}_{\geq0}$ such that $\frac{1}{p^{e_{m}}}\bs[r]_{m}$
converges to $\bs[c]$, $\left(\frac{1}{p^{e_{m}}}r_{m,i}\right)_{m}$
is a decreasing sequence for every $i$, $e_{m}\leq e_{m+1}$, and
$(\mf[a]^{\bs[r]_{m}})^{[1/p^{e_{m}}]}\not=(\mf[a]^{\bs[r]_{m+1}})^{[1/p^{e_{m+1}}]}$.
It follows from Lemma \ref{Contaiment in the positive direction}
that $(\mf[a]^{\bs[r]_{m}})^{[1/p^{e_{m}}]}\subsetneq(\mf[a]^{\bs[r]_{m+1}})^{[1/p^{e_{m+1}}]}$
for all $m$, but this contradicts the fact that $R$ is Noetherian. 

Assume now that $\bs[\epsilon]=(\epsilon_{1},\ldots,\epsilon_{n})$
is as above and let $I=(\mf[a]^{\bs[r]})^{[1/p^{e}]}$ for all $\bs[r]=(r_{1},...,r_{n})\in\mathbb{Z}^{n}$
with $c_{i}<\frac{1}{p^{e}}r_{i}<c_{i}+\epsilon_{i}$. We show that
$I=\tau(\mf[a]^{\bs[c]})$. Take $e$ large enough such that $\tau(\mf[a]^{\bs[c]})=(\mf[a]^{\lceil p^{e}\bs[c]\rceil})^{[1/p^{e}]}$
and $\frac{\lceil p^{e}c_{i}\rceil}{p^{e}}<c_{i}+\epsilon_{i}$ for
every $i$. If all $p^{e}c_{i}$ are non-integers then $\frac{\lceil p^{e}c_{i}\rceil}{p^{e}}>c_{i}$
and $\tau(\mf[a]^{\bs[c]})=I$. Let us suppose that $p^{e}c_{i}$
is an integer precisely when $i=i_{1},...,i_{l}$. Let $\bs[d]=(d_{1},\ldots,d_{n})$
be the vector whose $i_{j}$ coordinates are $1$ and all the other
are $0$. As $e$ is arbitrarily large we may also assume that $c_{i}<c_{i}+\frac{1}{p^{e}}d_{i}<c_{i}+\epsilon_{i}$
for all $i\in\{i_{1},\ldots,i_{l}\}$, hence $I=(\mf[a]^{\lceil p^{e}\bs[c]\rceil+\boldsymbol{d}})^{[1/p^{e}]}\subseteq(\mf[a]^{\lceil p^{e}\bs[c]\rceil})^{[1/p^{e}]}=\tau(\mf[a]^{\bs[c]})$. 

The reverse inclusion follows by showing $\mf[a]^{\lceil p^{e}\bs[c]\rceil}\subseteq I^{[p^{e}]}$.
Let $u\in\mf[a]^{\lceil p^{e}\bs[c]\rceil}$. If $e'>e$ and $e'$
is large enough, then $c_{i}<c_{i}+\frac{1}{p^{e'}}<c_{i}+\epsilon_{i}$,
hence $\mf[a]^{\lceil p^{e'}\bs[c]\rceil+\boldsymbol{1}}\subseteq I^{[p^{e'}]}$.
Here $\boldsymbol{1}$ denotes the vector whose coordinates are all
$1$. Thus, for $v$ a nonzero element in $\mf[a]_{1}\cdots\mf[a]_{n}$
we have 
\[
vu^{p^{e'-e}}\in\mbox{\ensuremath{\mf[a]}}^{p^{e'-e}\lceil p^{e}\bs[c]\rceil+\boldsymbol{1}}\subseteq\mf[a]^{\lceil p^{e'}\bs[c]\rceil+\boldsymbol{1}}\subseteq(I^{[p^{e}]})^{[p^{e'-e}]}.
\]
This implies that $u$ is in the tight closure of $I^{[p^{e}]}$,
but as $R$ is a regular ring, the tight closure of $I^{[p^{e}]}$
is equal to $I^{[p^{e}]}$(see \cite{HH}). This gives $\mathfrak{a}^{\lceil p^{e}\boldsymbol{c}\rceil}\subseteq I^{[p^{e}]}$
hence, by definition, $\tau(\mathfrak{a}^{\boldsymbol{c}})=(\mathfrak{a}^{\lceil p^{e}\boldsymbol{c}\rceil})^{[1/p^{e}]}\subseteq I$. \end{proof}
\begin{defn}
A positive real number $c$ is called an \textit{$F$-jumping number
of $\mf[a]$ in the direction $\bs[r]\not=\bs[0]\in\mathbb{Z}_{\geq0}^{n}$},
if $c$ is such that $\tau(\mf[a]^{c\bs[r]})\not=\tau(\mf[a]^{(c-\epsilon)\bs[r]})$
for every real number $\epsilon>0$. \end{defn}
\begin{prop}
\label{Mixed test ideals =00003D normal test ideals}If $r\in\mathbb{Z}_{\geq0}^{n}$
and $\lambda\in\mathbb{R}_{\geq0}$, then 
\[
\tau(\mf[a]^{\lambda r_{1}}\cdots\mf[a]^{\lambda r_{n}})=\tau(\mf[J]^{\lambda}),
\]

where $\mf[J]=\mf[a]_{1}^{r_{1}}\cdots\mf[a]_{n}^{r_{n}}$.\end{prop}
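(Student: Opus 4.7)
The plan is to leverage the right-continuity of mixed test ideals established in Proposition~\ref{Constancy in positive directions}. First I would reduce to the case $\lambda > 0$ and $r_i > 0$ for every $i$: if $\lambda = 0$ both sides equal $R$, while any index with $r_i = 0$ contributes only a trivial factor $\mathfrak{a}_i^0 = R$ to both $\tau(\mathfrak{a}_1^{\lambda r_1}\cdots \mathfrak{a}_n^{\lambda r_n})$ and $\mathfrak{J}$, and so can be deleted from both without affecting either side. After this reduction, the inequality $\lceil \lambda r_i p^e \rceil \leq r_i \lceil \lambda p^e \rceil$ gives the termwise inclusion
\[
(\mathfrak{a}_1^{r_1 \lceil \lambda p^e \rceil} \cdots \mathfrak{a}_n^{r_n \lceil \lambda p^e \rceil})^{[1/p^e]} \subseteq (\mathfrak{a}_1^{\lceil \lambda r_1 p^e \rceil} \cdots \mathfrak{a}_n^{\lceil \lambda r_n p^e \rceil})^{[1/p^e]}
\]
for every $e$, and after taking unions this yields the one easy containment $\tau(\mathfrak{J}^\lambda) \subseteq \tau(\mathfrak{a}_1^{\lambda r_1}\cdots \mathfrak{a}_n^{\lambda r_n})$.

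For the reverse inclusion I would set $\lambda' := \lceil \lambda p^e \rceil / p^e$ for $e$ large, so that $\lambda' \geq \lambda$ with $\lambda' - \lambda < 1/p^e$. Applying Proposition~\ref{Constancy in positive directions} both to the mixed exponent $\lambda \boldsymbol{r} \in \mathbb{R}_{\geq 0}^n$ (with perturbation $(\lambda'-\lambda)\boldsymbol{r}$, which has strictly positive coordinates thanks to the reduction) and to the single exponent $\lambda$ for the ideal $\mathfrak{J}$ (the case $n = 1$), we obtain
\[
\tau(\mathfrak{a}_1^{\lambda r_1}\cdots \mathfrak{a}_n^{\lambda r_n}) = \tau(\mathfrak{a}_1^{\lambda' r_1}\cdots \mathfrak{a}_n^{\lambda' r_n}) \quad \text{and} \quad \tau(\mathfrak{J}^\lambda) = \tau(\mathfrak{J}^{\lambda'})
\]
as soon as $e$ is sufficiently large. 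Setting $m := \lceil \lambda p^e \rceil$, the numerators $\lceil \lambda' r_i p^{e'} \rceil = m r_i p^{e'-e}$ and $\lceil \lambda' p^{e'} \rceil = m p^{e'-e}$ are genuine integers for every $e' \geq e$. Choosing such an $e'$ large enough that both mixed test ideals stabilize, a direct computation yields
\[
\tau(\mathfrak{a}_1^{\lambda' r_1}\cdots \mathfrak{a}_n^{\lambda' r_n}) = (\mathfrak{a}_1^{m r_1 p^{e'-e}}\cdots \mathfrak{a}_n^{m r_n p^{e'-e}})^{[1/p^{e'}]} = (\mathfrak{J}^{m p^{e'-e}})^{[1/p^{e'}]} = \tau(\mathfrak{J}^{\lambda'}),
\]
which closes the chain of equalities.

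The only delicate point is the hypothesis of Proposition~\ref{Constancy in positive directions} requiring strictly positive components in the perturbation vector; this is precisely why I carry out the preliminary reduction to $r_i > 0$ for all $i$, so that approaching $\lambda$ from above along the ray $\lambda \mapsto \lambda \boldsymbol{r}$ stays in the interior of the orthant rather than sliding along a coordinate hyperplane where right-continuity is not directly at hand.
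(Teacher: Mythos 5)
Your proof is correct and follows essentially the same route as the paper: reduce to $\lambda$ with $p$-power denominator via Proposition~\ref{Constancy in positive directions}, then verify the equality by a direct computation with the explicit formula for the test ideal at stable level. The separate termwise verification of the inclusion $\tau(\mathfrak{J}^\lambda)\subseteq\tau(\mathfrak{a}_1^{\lambda r_1}\cdots\mathfrak{a}_n^{\lambda r_n})$ is redundant (the right-continuity argument already gives full equality), but your explicit reduction to $r_i>0$ is a legitimate point of care that the paper leaves implicit.
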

\begin{proof}
By Propostion \ref{Constancy in positive directions}, we may assume
$\lambda=\frac{s}{p^{e'}}$ with $s\in\mathbb{Z}_{\geq0}$. For $e$
sufficiently large, we have 
\[
\tau(\mf[a]^{\lambda r_{1}}\cdots\mf[a]^{\lambda r_{n}})=(\mf[a]^{\lceil\lambda r_{1}p^{e}\rceil}\cdots\mf[a]^{\lceil\lambda r_{n}p^{e}\rceil})^{[1/p^{e}]}=(\mf[a]^{sr_{1}p^{e-e'}}\cdots\mf[a]^{sr_{n}p^{e-e'}})^{[1/p^{e}]}
\]
\[
=((\mf[a]^{r_{1}}\cdots\mf[a]^{r_{n}})^{sp^{e-e'}})^{[1/p^{e}]}=((\mf[a]^{r_{1}}\cdots\mf[a]^{r_{n}})^{\lambda p^{e}})^{[1/p^{e}]}=\tau(\mf[J]^{\lambda}).
\]
\end{proof}
\begin{cor}
The $F$-threshold of $\mathfrak{a}$ associated to $I$ in the direction
$\boldsymbol{r}=(r_{1},\ldots,r_{n})$ is equal to the $F$-threshold
of $\mf[a]_{1}^{r_{1}}\cdots\mf[a]_{n}^{r_{n}}$ associated to $I$. 
\end{cor}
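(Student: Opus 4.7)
The plan is to reduce the equality of $F$-thresholds to the equality of the approximating sequences $V^{I}_{\bs[r]}(\mf[a],p^{e})/p^{e}$ and $V^{I}(\mf[J],p^{e})/p^{e}$, and the latter is immediate from the definitions once one unpacks the notation.

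First I would set $\mf[J]=\mf[a]_{1}^{r_{1}}\cdots\mf[a]_{n}^{r_{n}}$, which under the shorthand of the remark is exactly $\mf[a]^{\bs[r]}$. I would then observe the key algebraic identity
\[
\mf[J]^{m}=(\mf[a]_{1}^{r_{1}}\cdots\mf[a]_{n}^{r_{n}})^{m}=\mf[a]_{1}^{mr_{1}}\cdots\mf[a]_{n}^{mr_{n}}=\mf[a]^{m\bs[r]}.
\]
This identity holds for every non-negative integer $m$, and the hypothesis $\mf[a]^{\bs[r]}\subseteq\operatorname{rad}(I)$ required to define $C^{I}_{\bs[r]}(\mf[a])$ is precisely the hypothesis $\mf[J]\subseteq\operatorname{rad}(I)$ needed to define the classical $F$-threshold $C^{I}(\mf[J])$ from the $n=1$ case (Remark following the definition of $C^{I}_{\bs[r]}(\mf[a])$).

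Next I would apply this identity to the definition of $V^{I}_{\bs[r]}(\mf[a],p^{e})$: for every $e$,
\[
V^{I}_{\bs[r]}(\mf[a],p^{e})=\max\{m\in\mathbb{Z}_{\geq 0}\mid\mf[a]^{m\bs[r]}\not\subseteq I^{[p^{e}]}\}=\max\{m\in\mathbb{Z}_{\geq 0}\mid\mf[J]^{m}\not\subseteq I^{[p^{e}]}\}=V^{I}(\mf[J],p^{e}).
\]
Dividing by $p^{e}$ and passing to the limit (the limits exist by the proposition that gives boundedness and monotonicity of the sequence) yields $C^{I}_{\bs[r]}(\mf[a])=C^{I}(\mf[J])$, which is the claim.

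There is essentially no obstacle: the corollary is a direct consequence of the multiplicative notation used to abbreviate $\mf[a]^{\bs[r]}$, together with the consistency of the approximating sequences in the two settings. The only point worth double-checking is that the compatibility of hypotheses $\mf[a]^{\bs[r]}\subseteq\operatorname{rad}(I)$ versus $\mf[J]\subseteq\operatorname{rad}(I)$ is automatic because $\mf[J]=\mf[a]^{\bs[r]}$, so no extra assumptions need to be invoked.
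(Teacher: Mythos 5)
Your proof is correct. The paper states this as a corollary without giving an explicit argument, and the direct unwinding of definitions you give is the natural one: since $\mf[J]^{m}=(\mf[a]_{1}^{r_{1}}\cdots\mf[a]_{n}^{r_{n}})^{m}=\mf[a]_{1}^{mr_{1}}\cdots\mf[a]_{n}^{mr_{n}}=\mf[a]^{m\bs[r]}$ for every $m\in\mathbb{Z}_{\geq0}$, the approximating sequences $V^{I}_{\bs[r]}(\mf[a],p^{e})$ and $V^{I}(\mf[J],p^{e})$ agree term by term, and both limits exist by the boundedness proposition, so they coincide. Your observation about the compatibility of the hypotheses $\mf[a]^{\bs[r]}\subseteq\mathrm{rad}(I)$ and $\mf[J]\subseteq\mathrm{rad}(I)$ is a useful sanity check. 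Note that this argument does not in fact rely on the preceding proposition about equality of test ideals $\tau(\mf[a]^{\lambda r_{1}}\cdots\mf[a]^{\lambda r_{n}})=\tau(\mf[J]^{\lambda})$, even though the paper's placement suggests otherwise: the identity you use lives entirely at the level of the ideals $\mf[a]^{m\bs[r]}$ and Frobenius powers $I^{[p^{e}]}$, not at the level of test ideals, so the definitional route is actually cleaner than trying to derive the threshold statement from the test-ideal proposition.
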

\medskip{}

\begin{cor}
The set of $F$ -jumping numbers of $\mf[a]$ in direction ${\bf r}$
is equal to the set of $F$-jumping numbers of $\mf[a]^{{\bf r}}$. 
\end{cor}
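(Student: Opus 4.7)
The plan is to derive this corollary as an immediate consequence of Proposition \ref{Mixed test ideals =00003D normal test ideals}, which already identifies mixed test ideals along the ray $\lambda\bs[r]$ with ordinary test ideals of the single ideal $\mf[J]=\mf[a]_{1}^{r_{1}}\cdots\mf[a]_{n}^{r_{n}}$. Concretely, that proposition gives, for every $\lambda\in\mathbb{R}_{\geq0}$, the equality
\[
\tau(\mf[a]^{\lambda\bs[r]})=\tau(\mf[J]^{\lambda}).
\]

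First I would unwind the two definitions side by side: $c>0$ is an $F$-jumping number of $\mf[a]$ in direction $\bs[r]$ exactly when $\tau(\mf[a]^{c\bs[r]})\neq\tau(\mf[a]^{(c-\epsilon)\bs[r]})$ for every $\epsilon>0$, while $c>0$ is an $F$-jumping number of $\mf[J]$ exactly when $\tau(\mf[J]^{c})\neq\tau(\mf[J]^{c-\epsilon})$ for every $\epsilon>0$. Applying the displayed equality with $\lambda=c$ and with $\lambda=c-\epsilon$ (for $\epsilon<c$; for $\epsilon\geq c$ we may replace $c-\epsilon$ by $0$ on both sides, where both test ideals equal $R$) turns one condition into the other verbatim. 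Thus each $c$ satisfies the first condition if and only if it satisfies the second, and the two sets of jumping numbers coincide.

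I do not expect any real obstacle here: the content is entirely packaged in Proposition \ref{Mixed test ideals =00003D normal test ideals}, and the only thing to check is that the small shift $c\mapsto c-\epsilon$ is preserved under the identification, which it is because the proposition applies uniformly in the scalar $\lambda$. The proof is therefore essentially one line citing the preceding proposition.
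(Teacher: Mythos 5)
Your proposal is correct and is exactly the argument the paper has in mind: the corollary is stated with no separate proof precisely because it is an immediate consequence of Proposition \ref{Mixed test ideals =00003D normal test ideals} applied to $\lambda=c$ and $\lambda=c-\epsilon$. Nothing further is needed.
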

Therefore \cite[Corollary 2.30]{BMS} implies the following. 
\begin{cor}
The set of $F$-jumping numbers of $\mf[a]$ in the direction $\bs[r]$
is equal to the set of $F$-thresholds of $\mf[a]$, associated to
various ideals $I$, in the direction $\bs[r]$.
\end{cor}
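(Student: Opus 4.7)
The plan is to reduce everything to the single-ideal case and invoke the cited BMS result. The two preceding corollaries in the excerpt already do the bulk of the translation: the first says that for fixed $\bs[r]$, the $F$-threshold of $\mf[a]$ associated to $I$ in direction $\bs[r]$ agrees with the (ordinary) $F$-threshold of the single ideal $\mf[J] := \mf[a]_1^{r_1}\cdots\mf[a]_n^{r_n}$ associated to $I$; the second says analogously that the set of $F$-jumping numbers of $\mf[a]$ in direction $\bs[r]$ coincides with the set of $F$-jumping numbers of $\mf[J]$. Both corollaries ultimately rest on Proposition \ref{Mixed test ideals =00003D normal test ideals}, which identifies $\tau(\mf[a]^{\lambda\bs[r]})$ with $\tau(\mf[J]^{\lambda})$ for every $\lambda \in \mathbb{R}_{\geq 0}$.

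Given these reductions, I would phrase the proof in one step. Fix the direction $\bs[r]\in\mathbb{Z}_{\geq 0}^{n}\setminus\{\bs[0]\}$, set $\mf[J] := \mf[a]^{\bs[r]}$, and consider the following diagram of sets:
\[
\{F\text{-jumping numbers of }\mf[a]\text{ in direction }\bs[r]\} \;\overset{(1)}{=}\; \{F\text{-jumping numbers of }\mf[J]\},
\]
\[
\{F\text{-thresholds of }\mf[a]\text{ associated to }I,\text{ in direction }\bs[r]\}_{I} \;\overset{(2)}{=}\; \{F\text{-thresholds of }\mf[J]\text{ associated to }I\}_{I},
\]
where $I$ ranges over the ideals for which these invariants are defined. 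Equality (1) is the preceding corollary on $F$-jumping numbers, and equality (2) is the preceding corollary on $F$-thresholds. By \cite[Corollary 2.30]{BMS}, applied to the single ideal $\mf[J]$, the right-hand sides of (1) and (2) coincide, which yields the desired equality between the left-hand sides.

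There is essentially no obstacle here, since the corollary is an immediate bookkeeping consequence: the content has been absorbed into Proposition \ref{Mixed test ideals =00003D normal test ideals} and the two preceding corollaries. The only small point worth checking is that the class of ideals $I$ appearing on each side of (2) is the same, i.e., that the condition $\mf[a]^{\bs[r]}\subseteq \operatorname{rad}(I)$ imposed in the definition of $V^{I}_{\bs[r]}(\mf[a],p^{e})$ corresponds exactly to $\mf[J]\subseteq \operatorname{rad}(I)$ in the single-ideal definition; but by construction $\mf[J] = \mf[a]^{\bs[r]}$, so the two conditions are literally identical, and no further argument is needed.
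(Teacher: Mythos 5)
Your proof is correct and follows exactly the same route as the paper, which simply states that the corollary follows from the two preceding corollaries together with \cite[Corollary 2.30]{BMS}. Your extra remark confirming that the class of ideals $I$ on the two sides of (2) coincides is a sensible sanity check, though as you note it is immediate from $\mathfrak{J}=\mathfrak{a}^{\boldsymbol{r}}$.
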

Given $l_{1},\ldots,l_{n}$ positive real numbers we denote by $[\boldsymbol{0},\boldsymbol{l}]$
the set $[0,l_{1}]\times\ldots\times[0,l_{n}].$
\begin{prop}
\label{Finite many test ideals} Given nonzero ideals $\mf[a]_{1},\ldots,\mf[a]_{n}$
of $R$, where $R$ is a regular, $F$-finite ring essentially of finite type over a finite field, the set $\{\tau(\mf[a]^{\bs[c]})|\:\bs[c]\in[\bs[0],\bs[l]]\}$
is finite.\end{prop}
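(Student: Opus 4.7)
The plan is to exhibit a uniform bound on the degree of generators of $\tau(\mf[a]^{\bs[c]})$ as $\bs[c]$ varies over $[\bs[0],\bs[l]]$, and then to exploit the finiteness of the base field $k$.

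First I would reduce to the case where $R=k[x_1,\ldots,x_r]$ is a polynomial ring. Since $R$ is regular and essentially of finite type over $k$, it is a localization of a finitely generated $k$-algebra, which is in turn a quotient of some polynomial ring $A=k[x_1,\ldots,x_r]$. Lifting each $\mf[a]_i$ to an ideal of $A$ and appealing to the standard compatibility of test ideals with localization (and descent through the quotient map, exploiting regularity of $R$), it suffices to establish finiteness of the analogous set upstairs. We may thus assume $R=k[x_1,\ldots,x_r]$.

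Next, fix a common degree bound $d$ on generators of $\mf[a]_1,\ldots,\mf[a]_n$ and set $L=l_1+\cdots+l_n$. Given any $\bs[c]\in[\bs[0],\bs[l]]$, choose $e=e(\bs[c])$ large enough that $\tau(\mf[a]^{\bs[c]})=(\mf[a]^{\lceil p^e\bs[c]\rceil})^{[1/p^e]}$. The product ideal $\mf[a]^{\lceil p^e\bs[c]\rceil}$ is generated by polynomials of degree at most $d\sum_i\lceil p^e c_i\rceil$, so applying Proposition \ref{thm: root ideals} exactly as in the proof of Theorem \ref{thm:bound on the degree generators mixed case} shows that $\tau(\mf[a]^{\bs[c]})$ is generated by polynomials of degree at most
\[
\frac{d\sum_i\lceil p^e c_i\rceil}{p^e}\le d\!\left(\sum_i c_i+\frac{n}{p^e}\right)\le d(L+n),
\]
a bound independent of both $\bs[c]$ and $e$.

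Since $k$ is a finite field, there are only finitely many polynomials in $k[x_1,\ldots,x_r]$ of total degree at most $\lfloor d(L+n)\rfloor$, and hence only finitely many ideals generated by such polynomials. This yields the desired finiteness of $\{\tau(\mf[a]^{\bs[c]}):\bs[c]\in[\bs[0],\bs[l]]\}$. The main obstacle is the reduction step from general $R$ to the polynomial ring, which requires the functoriality of test ideals under the maps coming from writing $R$ as a localization of a quotient of a polynomial ring; once this is in place, the remainder of the argument is a direct degree estimate leveraging Theorem \ref{thm:bound on the degree generators mixed case} together with the finiteness of $k$.
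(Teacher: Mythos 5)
Your proposal is correct and follows essentially the same approach as the paper: reduce to the polynomial ring over the finite field $k$, establish a uniform degree bound on generators of $\tau(\mf[a]^{\bs[c]})$ for $\bs[c]\in[\bs[0],\bs[l]]$, and conclude by the finiteness of $k$. The only minor difference is a technical shortcut: the paper first invokes Proposition \ref{Constancy in positive directions} to replace $\bs[c]$ by a nearby point with coordinates in $\frac{1}{p^e}\mathbb{Z}$ so that Theorem \ref{thm:bound on the degree generators mixed case} applies verbatim, whereas you carry out the degree estimate directly for arbitrary $\bs[c]$ by adapting the proof of that theorem (using $\lceil p^e c_i\rceil \le p^e c_i + 1$), which is slightly more streamlined but amounts to the same computation.
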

\begin{proof}
Since $R$ is assumed to be essentially of finite type over $k$,
arguing as in the proof of \cite[Theorem 3.1]{BMS}, one can see that
the assertion for all such $R$ follows if we know it for $R=k[x_{1},\ldots x_{r}]$,
with $r\geq1.$ We will therefore assume that we are in this case.

By Lemma \ref{Constancy in positive directions}, we may assume that
$\bs[c]=(\frac{\alpha_{1}}{p^{e}},...,\frac{\alpha_{n}}{p^{e}})$
with $\alpha_{i}\in\mathbb{N}$ and $e\geq1$. Let $d$ be an upper
bound for the degrees of the generators of $\mf[a_{i}]$, for all
$i$. By Theorem \ref{thm:bound on the degree generators mixed case}
we have that $\tau(\mf[a]^{\bs[c]})$ is generated by polynomials
of degree $\leq ndL$, where $L=\mbox{max}\{l_{i}\}$. Since $k$
is finite, there are only finitely many sets consisting of polynomials
of bounded degree and therefore only finitely many ideals $\tau(\mf[a]^{\bs[c]})$
where $\bs[c]\in[\bs[0],\bs[l]]$. 

\end{proof}

\begin{defn}
The $\textit{constancy region}$ for a test ideal $\tau(\mf[a]^{\boldsymbol{c}})$
is defined as the set of points $\boldsymbol{c'}\in\mathbb{R}_{\geq0}^{n}$
such that $\tau(\mf[a]^{\boldsymbol{c}})=\tau(\mf[a]^{\boldsymbol{c}'})$. \end{defn}
\begin{lem}
\textup{\label{lem:Description of the approximation}$B^{J}(\mf[a])$
consist of the points ${\bf c}\in\mathbb{R}_{\geq0}^{n}$ such that
$\tau(\mf[a]^{{\bf c}})\not\subseteq J.$}\end{lem}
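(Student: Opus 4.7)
The plan is to prove both implications by unwinding the definitions of $B^{J}(\mf[a])$, $V^{J}(\mf[a],p^{e})$, and $\tau(\mf[a]^{\bs[c]})$, with the only nontrivial ingredient being the obvious monotonicity of the bracket-root operation under the coordinatewise order.

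For the forward implication, suppose $\bs[c]\in B^{J}(\mf[a])$. By definition of $B^{J}(\mf[a])=\bigcup_{e>0}B^{J}(\mf[a],p^{e})$, there exist $e\geq1$ and integers $l_{1},\ldots,l_{n}\geq0$ with $(l_{1}/p^{e},\ldots,l_{n}/p^{e})\in V^{J}(\mf[a],p^{e})$ and $c_{i}\leq l_{i}/p^{e}$ for every $i$. The containment membership means $\mf[a]^{\bs[l]}\not\subseteq J^{[p^{e}]}$, equivalently $(\mf[a]^{\bs[l]})^{[1/p^{e}]}\not\subseteq J$. Since $\lceil p^{e}c_{i}\rceil\leq l_{i}$ for each $i$, we have $\mf[a]^{\lceil p^{e}\bs[c]\rceil}\supseteq\mf[a]^{\bs[l]}$, hence $(\mf[a]^{\lceil p^{e}\bs[c]\rceil})^{[1/p^{e}]}\supseteq(\mf[a]^{\bs[l]})^{[1/p^{e}]}$. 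Because $\tau(\mf[a]^{\bs[c]})$ contains each term of the union that defines it, we conclude $\tau(\mf[a]^{\bs[c]})\supseteq(\mf[a]^{\bs[l]})^{[1/p^{e}]}\not\subseteq J$.

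For the reverse implication, suppose $\tau(\mf[a]^{\bs[c]})\not\subseteq J$. Since $\tau(\mf[a]^{\bs[c]})=(\mf[a]^{\lceil p^{e}\bs[c]\rceil})^{[1/p^{e}]}$ for all $e$ sufficiently large (stabilization of the chain, as recalled after Definition~\ref{Def mixed test ideals}), we can pick such an $e$ and obtain $(\mf[a]^{\lceil p^{e}\bs[c]\rceil})^{[1/p^{e}]}\not\subseteq J$, that is, $\mf[a]^{\lceil p^{e}\bs[c]\rceil}\not\subseteq J^{[p^{e}]}$. Setting $l_{i}:=\lceil p^{e}c_{i}\rceil$, we get $(l_{1}/p^{e},\ldots,l_{n}/p^{e})\in V^{J}(\mf[a],p^{e})$ and $c_{i}\leq l_{i}/p^{e}$, so $\bs[c]\in[0,l_{1}/p^{e}]\times\cdots\times[0,l_{n}/p^{e}]\subseteq B^{J}(\mf[a],p^{e})\subseteq B^{J}(\mf[a])$.

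There is no real obstacle; the proof rests on the straightforward monotonicity $\lceil p^{e}c_{i}\rceil\leq l_{i}\Rightarrow\mf[a]^{\lceil p^{e}\bs[c]\rceil}\supseteq\mf[a]^{\bs[l]}$ and the fact that, at a \emph{fixed} level $e$, we do not need the full strength of Lemma~\ref{Contaiment in the positive direction} (which addresses comparison across different levels). The lemma is essentially a clean reformulation: $B^{J}(\mf[a])$ is the largest down-closed set in $\mathbb{R}_{\geq0}^{n}$ whose lattice approximations witness $\tau(\mf[a]^{\bs[c]})\not\subseteq J$.
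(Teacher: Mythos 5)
Your proof is correct, and it is a cleaner version of the paper's argument. Both rest on the same three ingredients: the defining property of the bracket root ($\mf[b]^{[1/p^{e}]}\subseteq J\Leftrightarrow\mf[b]\subseteq J^{[p^{e}]}$), monotonicity of $(-)^{[1/p^{e}]}$ under inclusion, and stabilization of the increasing chain that defines $\tau(\mf[a]^{\bs[c]})$. The paper, however, first proves the biconditional for lattice points $\bs[\alpha]/p^{e}$ and then reduces the general $\bs[c]$ to this case, invoking Proposition~\ref{Constancy in positive directions} in the reverse direction to produce a dominating lattice point $\bs[r]$ with $\tau(\mf[a]^{\bs[r]})=\tau(\mf[a]^{\bs[c]})$. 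You sidestep both the case split and the appeal to Proposition~\ref{Constancy in positive directions} by taking $l_{i}=\lceil p^{e}c_{i}\rceil$ directly for $e$ in the stabilization range; since the boxes $[0,l_{1}/p^{e}]\times\cdots\times[0,l_{n}/p^{e}]$ are closed, the non-strict domination $c_{i}\leq l_{i}/p^{e}$ given by the ceiling is all that is needed. What your route buys is a self-contained argument relying only on Noetherianity; what the paper's route buys is an explicit intermediate statement for lattice points (which the author does in fact re-use inside the general argument). Both are valid; yours is the more economical.
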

\begin{proof}
Assume first that ${\bf c}=(\frac{\alpha_{1}}{p^{e}},\ldots,\frac{\alpha_{n}}{p^{e}})$
with $\alpha_{i}\in\mathbb{N}$. Choose a representation of ${\bf c}$
with $e$ large enough such that $\tau(\mbox{\ensuremath{\mf[a]}}^{{\bf c}})=(\mbox{\ensuremath{\mf[a]}}^{{\bf {\bf \alpha}}})^{[1/p^{e}]}.$
In this case we have 

\[
{\bf c}\in B^{J}(\mf[a])\Longleftrightarrow\mbox{\ensuremath{\mf[a]}}^{{\bf \alpha}}\not\subseteq J^{[p^{e}]}\Longleftrightarrow(\mbox{\ensuremath{\mf[a]}}^{{\bf \alpha}})^{[1/p^{e}]}\not\subseteq J\Longleftrightarrow\tau(\mbox{\ensuremath{\mf[a]}}^{{\bf c}})\not\subseteq J.
\]

For the general case, let ${\bf c}\in B^{J}(\mf[a])$, this implies
that ${\bf c}\in B^{J}(\mf[a],p^{e})$ for some $e$. Therefore we
can find ${\bf r}=(\frac{\alpha_{1}}{p^{e}},\ldots,\frac{\alpha_{n}}{p^{e}})\in B^{J}(\mf[a],p^{e})\subseteq B^{J}(\mf[a])$,
with $\alpha_{i}\in\mathbb{N}$, $\frac{\alpha_{i}}{p^{e}}\geq c_{i}$.
By the first part this implies $\tau(\mbox{\ensuremath{\mf[a]}}^{{\bf r}})\not\subseteq J$,
but as $\frac{\alpha_{i}}{p^{e}}\geq c_{i}$ for all $i$, we have
that $\tau(\mbox{\ensuremath{\mf[a]}}^{{\bf r}})\subseteq\tau(\mbox{\ensuremath{\mf[a]}}^{{\bf c}})$
hence $\tau(\mbox{\ensuremath{\mf[a]}}^{{\bf c}})\not\subseteq J$.

For the reverse inclusion, let ${\bf c}\in\mathbb{R}_{\geq0}^{n}$
be such that $\tau(\mf[a]^{{\bf c}})\not\subseteq J.$ By Proposition
\ref{Constancy in positive directions} there is a point ${\bf r}=(\frac{\alpha_{1}}{p^{e}},\ldots,\frac{\alpha_{n}}{p^{e}})$
with $\alpha_{i}\in\mathbb{N}$, $\frac{\alpha_{i}}{p^{e}}\geq c_{i}$
and $\tau(\mf[a]^{{\bf c}})=\tau(\mf[a]^{{\bf r}})$, therefore $\tau(\mf[a]^{{\bf r}})\not\subseteq J$.
We use the first part again and conclude ${\bf r}\in B^{J}(\mf[a])$,
but as $\frac{\alpha_{i}}{p^{e}}\geq c_{i}$ for all $i$, we deduce
that ${\bf c}\in B^{J}(\mf[a])$.\end{proof}

\begin{thm}
If $\mf[a]_{1},...,\mbox{\ensuremath{\mf[a]}}_{n}$ are all contained
in a maximal ideal $\mf[m]$ and the base field $k$ is finite, then for each $\boldsymbol{c}\in\mathbb{R}_{\geq0}^{n}$,
there exist ideals $I_{1},...,I_{d}$ and $J$ such that the constancy
region for the test ideal $\tau(\mf[a]^{\bs[c]})$ is given by $\underset{i=1,...,d}{\bigcap}B^{I_{i}}(\mf[a])\backslash B^{J}(\mf[a])$.\end{thm}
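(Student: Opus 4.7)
The plan is to set $J := \tau_0$ where $\tau_0 := \tau(\mathfrak{a}^{\boldsymbol{c}})$. By Lemma~\ref{lem:Description of the approximation}, the set $\mathbb{R}_{\geq 0}^{n}\setminus B^{J}(\mathfrak{a})$ equals $\{\boldsymbol{c}'\in\mathbb{R}_{\geq 0}^{n}:\tau(\mathfrak{a}^{\boldsymbol{c}'})\subseteq\tau_{0}\}$ and already contains the constancy region; what remains is to cut out the locus $\{\tau(\mathfrak{a}^{\boldsymbol{c}'})\subsetneq\tau_{0}\}$ using finitely many conditions of the form ``$\tau(\mathfrak{a}^{\boldsymbol{c}'})\not\subseteq I_i$'', which by the Lemma translate to $\boldsymbol{c}'\in B^{I_i}(\mathfrak{a})$.

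For the $I_i$ I would take the maximal elements (under inclusion) $\tau_1,\ldots,\tau_d$ of the family $\mathcal{F}=\{\tau(\mathfrak{a}^{\boldsymbol{c}'}):\boldsymbol{c}'\in\mathbb{R}_{\geq 0}^{n}\text{ and }\tau(\mathfrak{a}^{\boldsymbol{c}'})\subsetneq\tau_{0}\}$, setting $I_i:=\tau_i$. The verification is formal: if $\tau(\mathfrak{a}^{\boldsymbol{c}'})=\tau_{0}$ then $\tau_{0}\not\subseteq I_i$ since $I_i\subsetneq\tau_{0}$, so $\boldsymbol{c}'\in B^{I_i}(\mathfrak{a})$ for every $i$; conversely, if $\boldsymbol{c}'\in\bigcap_i B^{I_i}(\mathfrak{a})\setminus B^{J}(\mathfrak{a})$ has $\tau(\mathfrak{a}^{\boldsymbol{c}'})\subsetneq\tau_{0}$, then $\tau(\mathfrak{a}^{\boldsymbol{c}'})\in\mathcal{F}$ is bounded above by some maximal element $I_i$, contradicting $\boldsymbol{c}'\in B^{I_i}(\mathfrak{a})$.

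The substantive obstacle is showing $\mathcal{F}$ has only finitely many maximal elements; this is where both hypotheses are used. The key step is to prove that the constancy region $U$ of $\tau_{0}$ is contained in some bounded box $[\boldsymbol{0},\boldsymbol{l}]$. Using $\mathfrak{a}_i\subseteq\mathfrak{m}$ together with a Skoda-type reduction, once $c'_i$ exceeds the number of generators $r_i$ of $\mathfrak{a}_i$ one has $\tau(\mathfrak{a}^{\boldsymbol{c}'})\subseteq\mathfrak{a}_i^{c'_i-r_i}\subseteq\mathfrak{m}^{c'_i-r_i}$, which for $c'_i$ sufficiently large lies strictly inside $\tau_{0}$ (because the nonzero ideal $\tau_{0}$ cannot contain arbitrarily high powers of $\mathfrak{m}$, by Krull's intersection theorem after localizing at $\mathfrak{m}$). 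Proposition~\ref{Finite many test ideals} applied to $[\boldsymbol{0},\boldsymbol{l}]$, where the hypothesis that $k$ is finite intervenes, then yields finitely many test ideals on this box.

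To conclude, I would argue that every maximal $\tau_j\in\mathcal{F}$ is realized at a point of $[\boldsymbol{0},\boldsymbol{l}]$. Given any $\boldsymbol{c}_j$ realizing $\tau_j$, set $\boldsymbol{c}_j^{*}:=\boldsymbol{c}\vee\boldsymbol{c}_j$ and consider the segment from $\boldsymbol{c}$ to $\boldsymbol{c}_j^{*}$; along this, coordinates are non-decreasing and so $\tau$ is non-increasing, starting at $\tau_{0}$ and ending at $\tau(\mathfrak{a}^{\boldsymbol{c}_j^{*}})\subseteq\tau_j$. Let $\boldsymbol{c}^{**}$ be the first point at which $\tau$ strictly drops below $\tau_{0}$; Proposition~\ref{Constancy in positive directions} rules out $\tau(\mathfrak{a}^{\boldsymbol{c}^{**}})=\tau_{0}$ (such equality would force a positive-direction neighborhood on which $\tau=\tau_{0}$, contradicting the choice of $\boldsymbol{c}^{**}$), and since points just before $\boldsymbol{c}^{**}$ on the segment lie in $U$, we have $\boldsymbol{c}^{**}\in\overline{U}\subseteq[\boldsymbol{0},\boldsymbol{l}]$. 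The value $\tau(\mathfrak{a}^{\boldsymbol{c}^{**}})$ lies in $\mathcal{F}$ and contains $\tau(\mathfrak{a}^{\boldsymbol{c}_j^{*}})\subseteq\tau_j$, and the maximality of $\tau_j$ (no test ideal strictly between $\tau_j$ and $\tau_{0}$) forces $\tau(\mathfrak{a}^{\boldsymbol{c}^{**}})=\tau_j$. Thus each maximal element of $\mathcal{F}$ appears among the finitely many test ideals on $[\boldsymbol{0},\boldsymbol{l}]$, completing the proof.
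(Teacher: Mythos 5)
Your overall strategy mirrors the paper's: bound the constancy region using $\mathfrak{a}_i\subseteq\mathfrak{m}$ together with Krull's intersection theorem, and then exploit Proposition~\ref{Finite many test ideals} on a large box. The formal verification that the chosen $I_i$ and $J$ give the stated description is correct as far as it goes, and your Skoda-flavoured version of the boundedness estimate works (the paper's direct estimate $\tau(\mathfrak{a}^{\boldsymbol{c}'})\subseteq\mathfrak{m}^{\lceil c'_1+\ldots+c'_n\rceil-n+1}$ is a touch simpler, but either suffices).

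The gap is in the last paragraph, where you claim that every maximal element $\tau_j$ of $\mathcal{F}$ is realized at a point of $[\boldsymbol{0},\boldsymbol{l}]$. You have $\tau(\mathfrak{a}^{\boldsymbol{c}^{**}})\supseteq\tau(\mathfrak{a}^{\boldsymbol{c}_j^{*}})$ and separately $\tau(\mathfrak{a}^{\boldsymbol{c}_j^{*}})\subseteq\tau_j$, but these two containments do \emph{not} yield any containment between $\tau(\mathfrak{a}^{\boldsymbol{c}^{**}})$ and $\tau_j$; the ideal $\tau(\mathfrak{a}^{\boldsymbol{c}^{**}})$ could perfectly well be a \emph{different} maximal element of $\mathcal{F}$, incomparable with $\tau_j$ (both lying above the common lower bound $\tau(\mathfrak{a}^{\boldsymbol{c}_j^{*}})$). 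The parenthetical gloss of maximality, ``no test ideal strictly between $\tau_j$ and $\tau_0$,'' is also not what maximality means: $\mathcal{F}$ is only partially ordered, so there may be many pairwise incomparable maximal elements, and ``between'' is not well-defined. Without $\tau_j\subseteq\tau(\mathfrak{a}^{\boldsymbol{c}^{**}})$ the maximality of $\tau_j$ gives you nothing, so the finiteness of the set of maximal elements of $\mathcal{F}$ — the entire point of this step — remains unproved.

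The paper sidesteps this difficulty by not attempting to locate \emph{maximal} elements at all: it takes $I_1,\ldots,I_d$ to be \emph{all} the (finitely many, by Proposition~\ref{Finite many test ideals}) test ideals occurring on the box $[\boldsymbol{0},\boldsymbol{l}]$ that are strictly contained in $\tau_0$. Then for any $\boldsymbol{r}$ in the box with $\tau(\mathfrak{a}^{\boldsymbol{r}})\subsetneq\tau_0$ one automatically has $\tau(\mathfrak{a}^{\boldsymbol{r}})=I_i$ for some $i$, with no need to sort out the poset structure of $\mathcal{F}$; and the boundedness estimate (essentially what you prove, which shows $\tau(\mathfrak{a}^{\boldsymbol{r}})\subseteq\mathfrak{m}^{L}$ once $r_1+\ldots+r_n$ is large, after which one may append $\mathfrak{m}^{L}$ to the list of $I_i$'s) confines the candidate set to the box. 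If you want to keep your formulation with maximal elements, you would need an actual argument that every maximal element of $\mathcal{F}$ is attained on $[\boldsymbol{0},\boldsymbol{l}]$; as written the segment argument does not supply one, and it is cleaner to simply use all the test ideals on the box as the paper does.
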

\begin{proof}
We first show that this constancy region is bounded. As $\mf[a]_{i}\subseteq\mf[m]$
for all $i$ we have that for any $\boldsymbol{c'}\in\mathbb{R}_{\geq0}^{n}$and
$e$ sufficiently large 
\[
\tau(\mf[a]^{{\bf c'}})=(\mf[a]_{1}^{\lceil c'_{1}p^{e}\rceil}\cdots\mf[a]_{n}^{\lceil c'_{n}p^{e}\rceil})^{[1/p^{e}]}\subseteq(\mf[m]^{\lceil c'_{1}p^{e}\rceil+\ldots+\lceil c'_{n}p^{e}\rceil})^{[1/p^{e}]}
\]
\[
\subseteq\mbox{\ensuremath{\mf[(]}}\mf[m]^{\lceil c'_{1}p^{e}+\ldots+c'_{n}p^{e}\rceil-n})^{[1/p^{e}]}\subseteq\mf[m]^{\lceil c'_{1}+\ldots+c'_{n}\rceil-n+1}.
\]

Since $\cap_{s}\mf[m]^{s}=0$, there is $L$ such that $\tau(\mf[a]^{{\bf c}})\not\subseteq\mbox{\ensuremath{\mf[m]}}^{L}$,
we deduce that for any ${\bf c}'$ in the constancy region $\tau(\mf[a]^{{\bf c}'})=\tau(\mf[a]^{{\bf c}})\not\subseteq\mbox{\ensuremath{\mf[m]}}^{L}$,
hence $c'_{1}+\ldots+c'_{n}\leq L$. This implies that the constancy
region for $\tau(\mf[a]^{\bs[c]})$ is bounded. 

To deduce our description consider a sufficiently large hypercube
$[\bs[0],\bs[l]]$ containing the constancy region for $\tau(\mf[a]^{{\bf c}})$.
By Proposition \ref{Finite many test ideals}, we know that the set
$\mathcal{A}=\{\tau(\mf[a]^{\bs[c]})|\:\bs[c]\in[\bs[0],\bs[l]]\}$
is finite. Let $I_{1},\ldots,I_{d}$ be the ideals in $\mathcal{A}$
that are strictly contained in $\tau(\mf[a]^{{\bf c}})$ and let $J=\tau(\mf[a]^{{\bf c}})$.
We claim that the constancy region for $\tau(\mf[a]^{{\bf c}})$ is
equal to $\underset{i=1,...,d}{\bigcap}B^{I_{i}}(\mf[a])\backslash B^{J}(\mf[a])$.
Lemma \ref{lem:Description of the approximation} implies that the
set $\underset{i=1,...,d}{\bigcap}B^{I_{i}}(\mf[a])\backslash B^{J}(\mf[a])$
is equal to the set of all ${\bf r}$ such that $\tau(\mf[a]^{{\bf r}})\not\subseteq I_{i}$
for all $i$ and $\tau(\mf[a]^{{\bf c}})\subseteq\tau(\mf[a]^{{\bf r}})$,
or equivalently, $\tau(\mf[a]^{{\bf r}})=\tau(\mf[a]^{{\bf c}})$
by our choice of $I_{i}$.\end{proof}
\begin{rem}
\label{Regions}We can remove the condition that all ideals $\mf[a]_{i}$
are contained in a maximal ideal and still get a similar description.
Explicitly, in each hypercube $[\bs[0],\bs[l]]$ the constancy region
is given by $\underset{i=1,...,d}{\bigcap}\big(B^{I_{i}}(\mf[a])\backslash B^{J}(\mf[a])\big)\cap[\bs[0],\bs[l]],$
for suitable $I_{1},\ldots,I_{d}$ and $J$.
\end{rem}
We now give a version of Skoda's theorem for mixed test ideals (see
\cite[Proposition 2.25]{BMS} for the case of one ideal). This theorem
allows us to describe the constancy regions in the first octant by
describing only the constancy regions in a sufficiently large hypercube
$[\bs[0],\bs[l]]=[0,l_{1}]\times\ldots\times[0,l_{n}]$.
\begin{thm}
\label{thm:(Skoda's-Theorem)}(Skoda's Theorem) Let $\bs[e_{1}],...,\bs[e_{n}]$
be the standard basis for $\mathbb{R}^{n}$, and assume $1\leq i\leq n$.
If $\mf[a]_{i}$ is generated by $m_{i}$ elements, then for every
$\bs[s]=(s_{1},\ldots,s_{n})$ with $s_{i}\geq m_{i}$, we have 
\[
\tau(\mf[a]^{\bs[s]})=\mf[a]^{\bs[e_{i}]}\tau(\mf[a]^{\bs[s]-\bs[e_{i}]}).
\]
\end{thm}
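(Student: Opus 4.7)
The plan is to imitate the proof of Skoda's theorem for a single ideal (\cite[Proposition 2.25]{BMS}): the pigeonhole argument that yields $\mf[a]^{p^{e}s}\subseteq\mf[a]^{[p^{e}]}\cdot\mf[a]^{p^{e}(s-1)}$ when $s\geq m$ is applied only in the $i$-th factor. First I fix $e$ large enough that both
\[
\tau(\mf[a]^{\bs[s]})=(\mf[a]^{\lceil p^{e}\bs[s]\rceil})^{[1/p^{e}]}\qquad\text{and}\qquad\tau(\mf[a]^{\bs[s]-\bs[e_{i}]})=(\mf[a]^{\lceil p^{e}(\bs[s]-\bs[e_{i}])\rceil})^{[1/p^{e}]}.
\]
Since $p^{e}$ is an integer, $\lceil p^{e}s_{i}\rceil-p^{e}=\lceil p^{e}(s_{i}-1)\rceil$, so the two bracketed ideals differ only in the $\mf[a]_{i}$-exponent, which drops by $p^{e}$, while all other exponents match.

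For the easy inclusion $\mf[a]_{i}\,\tau(\mf[a]^{\bs[s]-\bs[e_{i}]})\subseteq\tau(\mf[a]^{\bs[s]})$, I use the standard identity $(\mf[c]^{[p^{e}]}\mf[b])^{[1/p^{e}]}=\mf[c]\,\mf[b]^{[1/p^{e}]}$ to rewrite the left-hand side as $(\mf[a]_{i}^{[p^{e}]}\cdot\mf[a]^{\lceil p^{e}(\bs[s]-\bs[e_{i}])\rceil})^{[1/p^{e}]}$. The containment $\mf[a]_{i}^{[p^{e}]}\subseteq\mf[a]_{i}^{p^{e}}$ then bounds the product inside by $\mf[a]^{\lceil p^{e}\bs[s]\rceil}$, and taking $[1/p^{e}]$-roots gives the claim.

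The substantive step is the reverse inclusion. Let $f_{1},\ldots,f_{m_{i}}$ generate $\mf[a]_{i}$. Then $\mf[a]_{i}^{\lceil p^{e}s_{i}\rceil}$ is generated by monomials $f_{1}^{\alpha_{1}}\cdots f_{m_{i}}^{\alpha_{m_{i}}}$ with $\sum_{j}\alpha_{j}=\lceil p^{e}s_{i}\rceil\geq p^{e}s_{i}\geq p^{e}m_{i}$, so by pigeonhole some $\alpha_{j}\geq p^{e}$ and the monomial lies in $\mf[a]_{i}^{[p^{e}]}\cdot\mf[a]_{i}^{\lceil p^{e}s_{i}\rceil-p^{e}}$. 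Hence
\[
\mf[a]_{i}^{\lceil p^{e}s_{i}\rceil}\subseteq\mf[a]_{i}^{[p^{e}]}\cdot\mf[a]_{i}^{\lceil p^{e}(s_{i}-1)\rceil}.
\]
Multiplying by the remaining factors $\prod_{j\neq i}\mf[a]_{j}^{\lceil p^{e}s_{j}\rceil}$, taking $[1/p^{e}]$-roots, and applying the same identity once more yields $\tau(\mf[a]^{\bs[s]})\subseteq\mf[a]_{i}\,\tau(\mf[a]^{\bs[s]-\bs[e_{i}]})$.

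The only (minor) technical obstacle is the ceiling bookkeeping and verifying that a single $e$ can be chosen large enough for both test ideals to stabilize at their level-$p^{e}$ approximations; the hypothesis $s_{i}\geq m_{i}$ enters precisely to make the pigeonhole step available in the $i$-th factor, and nothing needs to be assumed about $s_{j}$ for $j\neq i$.
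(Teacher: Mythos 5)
Your proof is correct and follows essentially the same strategy as the paper: reduce to a fixed level $e$ at which both test ideals stabilize, then prove the two inclusions of $(\mathfrak{a}^{\lceil p^{e}\boldsymbol{s}\rceil})^{[1/p^{e}]}=\mathfrak{a}_{i}\,(\mathfrak{a}^{\lceil p^{e}(\boldsymbol{s}-\boldsymbol{e}_{i})\rceil})^{[1/p^{e}]}$, with the pigeonhole argument in the $i$-th factor carrying the substantive direction. The only (cosmetic) difference is in the easy inclusion: you invoke the identity $(\mathfrak{b}^{[p^{e}]}\mathfrak{c})^{[1/p^{e}]}=\mathfrak{b}\,\mathfrak{c}^{[1/p^{e}]}$ directly and use $\mathfrak{a}_{i}^{[p^{e}]}\subseteq\mathfrak{a}_{i}^{p^{e}}$, whereas the paper runs a colon-ideal computation $(\mathfrak{a}^{\boldsymbol{d}-p^{e}\boldsymbol{e}_{i}})^{[1/p^{e}]}\subseteq\big((\mathfrak{a}^{\boldsymbol{d}})^{[1/p^{e}]}:\mathfrak{a}_{i}\big)$ via flatness of Frobenius; both are routine and yield the same containment.
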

\begin{proof}
We only need to prove $(\mf[a]^{\lceil p^{e}\bs[s]\rceil})^{[1/p^{e}]}=\mf[a]^{\bs[e_{i}]}(\mf[a]^{\lceil p^{e}(\bs[s]-\bs[e_{i}])\rceil})^{[1/p^{e}]}$
for $e$ large enough. 

Let $\bs[d]=(d_{1},..,d_{n})$ be a vector with integer coordinates
and $d_{i}\geq p^{e}s_{i}$. We want to show that 
\[
(\mf[a]^{\bs[d]})^{[1/p^{e}]}=\mf[a]^{\bs[e_{i}]}(\mf[a]^{\bs[d]-p^{e}\bs[e_{i}]})^{[1/p^{e}]},
\]
from which the result follows. 

Since $\mf[a]^{\bs[d]-p^{e}\bs[e_{i}]}\cdot\mf[a]_{i}^{[p^{e}]}\subseteq\mbox{\ensuremath{\mf[a]}}^{\bs[d]}\subseteq((\mf[a]^{\bs[d]})^{[1/p^{e}]})^{[p^{e}]}$,
we have
\[
\mf[a]^{\boldsymbol{d}-p^{e}\bs[e_{i}]}\subseteq\big(((\mf[a]^{\bs[d]})^{[1/p^{e}]})^{[p^{e}]}:\mf[a]_{i}^{[p^{e}]}\big)=\big((\mf[a]^{\bs[d]})^{[1/p^{e}]}:\mf[a]_{i}\big)^{[p^{e}]},
\]
where the equality is consequence of the flatness of Frobenius. Therefore
\[
(\mf[a]^{\bs[d]-p^{e}\bs[e_{i}]})^{[1/p^{e}]}\subseteq((\mf[a]^{\bs[d]})^{[1/p^{e}]}:\mf[a]_{i}),
\]
 that is, 
\[
\mf[a]^{\bs[e_{i}]}(\mf[a]^{\bs[d]-p^{e}\bs[e_{i}]})^{[1/p^{e}]}\subseteq(\mf[a]^{\bs[d]})^{[1/p^{e}]}.
\]
For the reverse inclusion, note that since $d_{i}\geq m_{i}(p^{e}-1)+1$,
in the product of $d_{i}$ of the generators of $\mf[a]_{i}$ at least
one should appear with multiplicity $\geq p^{e}$. Therefore $\mf[a]^{\bs[d]}=\mf[a]_{i}^{[p^{e}]}\cdot\mf[a]^{\bs[d]-p^{e}\bs[e_{i}]}$,
hence
\[
\mf[a]^{\bs[d]}\subseteq\mf[a]_{i}^{[p^{e}]}\cdot\mf[a]^{\bs[d]-p^{e}\bs[e_{i}]}\subseteq\mf[a]_{i}^{[p^{e}]}\cdot\big((\mf[a]^{\bs[d]-p^{e}\bs[e_{i}]})^{[1/p^{e}]}\big)^{[p^{e}]}=\big(\mf[a]^{\bs[e_{i}]}\cdot(\mf[a]^{\bs[d]-p^{e}\bs[e_{i}]})^{[1/p^{e}]}\big)^{[p^{e}]},
\]
which clearly implies $(\mf[a]^{\bs[d]})^{[1/p^{e}]}\subseteq\mf[a]^{\bs[e_{i}]}(\mf[a]^{\bs[d]-p^{e}\bs[e_{i}]})^{[1/p^{e}]}.$ \end{proof}
\begin{prop}
If $c$ is an $F$-jumping number in the direction $\bs[r]=(r_{1},\ldots,r_{n})$
then also $cp$ is an $F$-jumping number in the direction $\bs[r]$.\end{prop}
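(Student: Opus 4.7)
The plan is to reduce to the case of a single ideal and then establish a clean compatibility between $\tau(\mf[J]^c)$ and $\tau(\mf[J]^{cp})$ that transports non-jumping information between the two. By the corollary to Proposition \ref{Mixed test ideals =00003D normal test ideals} recorded above, writing $\mf[J] = \mf[a]_1^{r_1}\cdots\mf[a]_n^{r_n}$, the $F$-jumping numbers of $\mf[a]$ in direction $\bs[r]$ coincide with the $F$-jumping numbers of the single ideal $\mf[J]$. Thus it suffices to show that if $c$ is an $F$-jumping number of the single ideal $\mf[J]$, then so is $cp$.

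For this single-ideal statement, the key step is to establish the identity
\[
\tau(\mf[J]^{c})=\bigl(\tau(\mf[J]^{cp})\bigr)^{[1/p]}
\]
valid for every $c \geq 0$. I would verify this by choosing $e$ large enough that both defining sequences have stabilized, so that $\tau(\mf[J]^{c}) = (\mf[J]^{\lceil cp^{e+1}\rceil})^{[1/p^{e+1}]}$ (taking index $e+1$ in the definition for $c$) and $\tau(\mf[J]^{cp}) = (\mf[J]^{\lceil cp^{e+1}\rceil})^{[1/p^{e}]}$ (taking index $e$ in the definition for $cp$, noting that $\lceil cp\cdot p^e\rceil = \lceil cp^{e+1}\rceil$). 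The standard relation $(I^{[1/p^e]})^{[1/p]} = I^{[1/p^{e+1}]}$, applied with $I = \mf[J]^{\lceil cp^{e+1}\rceil}$, then delivers the formula. An identical computation with $c - \delta/p$ in place of $c$ and $cp - \delta$ in place of $cp$ yields the shifted version $\tau(\mf[J]^{c - \delta/p}) = (\tau(\mf[J]^{cp - \delta}))^{[1/p]}$ for any $0 < \delta \leq cp$.

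I would then conclude by contrapositive. If $cp$ is not an $F$-jumping number, there exists $\delta > 0$, which by shrinking we may assume satisfies $\delta < cp$, with $\tau(\mf[J]^{cp - \delta}) = \tau(\mf[J]^{cp})$. Applying $(\cdot)^{[1/p]}$ to both sides and invoking the two identities above forces $\tau(\mf[J]^{c - \delta/p}) = \tau(\mf[J]^c)$, contradicting that $c$ is an $F$-jumping number. The main technical point is the careful verification of $\tau(\mf[J]^c) = (\tau(\mf[J]^{cp}))^{[1/p]}$: this requires choosing one value of $e$ that simultaneously sits above the stabilization thresholds for the defining sequences of $\tau(\mf[J]^c)$, $\tau(\mf[J]^{cp})$, and their $\delta$-shifted counterparts; once this bookkeeping is done, the rest of the argument is formal.
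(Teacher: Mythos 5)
Your proof is correct but follows a genuinely different route from the paper. The paper's proof is a one-liner: it first identifies (via the earlier corollaries) the $F$-jumping numbers of $\mf[a]$ in direction $\bs[r]$ with the $F$-thresholds $C_{\bs[r]}^{I}(\mf[a])$ for varying $I$, and then observes that $V_{\bs[r]}^{I}(\mf[a],p^{e+1})=V_{\bs[r]}^{I^{[p]}}(\mf[a],p^{e})$, which directly yields $pC_{\bs[r]}^{I}(\mf[a])=C_{\bs[r]}^{I^{[p]}}(\mf[a])$, so multiplication by $p$ sends $F$-thresholds to $F$-thresholds. You instead reduce to the single ideal $\mf[J]=\mf[a]_1^{r_1}\cdots\mf[a]_n^{r_n}$ and establish $\tau(\mf[J]^{c})=\bigl(\tau(\mf[J]^{cp})\bigr)^{[1/p]}$; this compatibility is precisely the single-ideal, $e=1$ case of the paper's Lemma \ref{lemma for the main theorem}, which appears only later in Section 4. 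Your contrapositive argument from that identity is clean and avoids the $F$-threshold machinery entirely: if $\tau(\mf[J]^{cp-\delta})=\tau(\mf[J]^{cp})$ then applying $[1/p]$ and the identity (at both $cp$ and $cp-\delta$) gives $\tau(\mf[J]^{c-\delta/p})=\tau(\mf[J]^{c})$. The trade-off is that the paper's argument is essentially immediate once the $F$-threshold correspondence is in place, while yours is more self-contained but re-proves (a special case of) a lemma the paper develops separately for a different purpose. Both are valid; the key technical point you use, $(I^{[1/p^{e}]})^{[1/p]}=I^{[1/p^{e+1}]}$, is indeed an equality for regular $F$-finite $R$ (it is \cite[Lemma 2.4]{BMS}), so there is no gap.
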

\begin{proof}
Note that $V_{\bs[r]}^{I}(\mf[a],p^{e+1})=V_{\bs[r]}^{I^{[p]}}(\mf[a],p^{e})$,
hence $pC_{\bs[r]}^{I}(\mf[a])=C_{\bs[r]}^{I^{[p]}}(\mf[a])$.
\end{proof}

\section{The constancy regions}

In this section we prove our main result, Theorem \ref{thm:Main Theorem}
below. We begin by recalling our definition of $p$-fractals.

Let $\mathcal{F}$ be the algebra of functions $\phi:\mathbb{R}_{\geq0}^{n}\rightarrow\mathbb{Q}$.
For each $q=p^{e}$ and every $\bs[b]=(b_{1},\ldots,b_{n})\in\mathbb{Z}^{n}$
with $0\leq b_{i}<q$ we define a family of operators $T_{q|\bs[b]}:\mathcal{F}\rightarrow\mathcal{F}$
by 
\[
T_{q|\bs[b]}\phi(t_{1},\ldots,t_{n})=\phi((t_{1}+b_{1})/q,\ldots,(t_{n}+b_{n})/q).
\]

\begin{defn}
\label{Def Fractal function}Let $\phi:[0,l]^{n}\rightarrow\mathbb{Q}$
be a map and let denote also by $\phi$ its extension by zero to $\mathbb{R}_{\geq0}^{n}$.
We say that $\phi$ is a $p\textit{-fractal}$ if all the $T_{q|\bs[b]}\phi$
span a finite dimensional $\mathbb{Q}$-subspace $V$ of $\mathcal{F}$.
Furthermore, we say that an arbitrary $\phi\in\mathcal{F}$ is a $p\textit{-fractal}$
if its restriction to each hypercube $[\bs[0],\bs[l]]$ is a $p$-fractal.\end{defn}
\begin{rem}
This definition is similar to the one in \cite[Definition 2.1]{MT}.
The only difference is that in \cite[Definition 2.1]{MT} the domain
of the functions is the hypercube $[0,1]\times\ldots\times[0,1].$
\end{rem}
In this section we assume that $R$ is a regular, $F$-finite ring
essentially of finite type over a finite field of characteristic $p>0$,
and $\mf[a]_{i}\subseteq R$ are nonzero ideals.
\begin{lem}
\label{lemma for the main theorem} Let $\bs[c]=(c_{1},\ldots,c_{n})\in\mathbb{R}_{\geq0}^{n}$,
and $\mf[a]_{1},\ldots\mf[a]_{n}$ be nonzero ideals of $R$ then
$\tau(\mf[a]^{\bs[c]})^{[1/p^{e}]}=\tau(\mf[a]^{\frac{1}{p^{e}}\bs[c]})$. \end{lem}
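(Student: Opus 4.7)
My plan is to prove the lemma by a direct computation using the definition of the mixed test ideal, after choosing an index $e'$ large enough that the defining union has stabilized on both sides, and then applying the basic identity $(J^{[1/p^{a}]})^{[1/p^{b}]} = J^{[1/p^{a+b}]}$ for arbitrary ideals $J$.

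First I would choose $e'$ so large that
\[
\tau(\mf[a]^{\bs[c]}) = (\mf[a]^{\lceil p^{e'}\bs[c]\rceil})^{[1/p^{e'}]}
\]
and, simultaneously, $e+e'$ is large enough that the analogous stabilization holds for $\tau(\mf[a]^{\frac{1}{p^{e}}\bs[c]})$. For the right-hand side this gives
\[
\tau(\mf[a]^{\frac{1}{p^{e}}\bs[c]}) = \bigl(\mf[a]^{\lceil p^{e+e'}\cdot\frac{1}{p^{e}}\bs[c]\rceil}\bigr)^{[1/p^{e+e'}]} = (\mf[a]^{\lceil p^{e'}\bs[c]\rceil})^{[1/p^{e+e'}]},
\]
so the key task reduces to showing
\[
\bigl((\mf[a]^{\lceil p^{e'}\bs[c]\rceil})^{[1/p^{e'}]}\bigr)^{[1/p^{e}]} = (\mf[a]^{\lceil p^{e'}\bs[c]\rceil})^{[1/p^{e+e'}]}.
\]

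The remaining content is then the identity $(J^{[1/p^{a}]})^{[1/p^{b}]} = J^{[1/p^{a+b}]}$, which I would verify directly from the definition of bracket roots: $(J^{[1/p^{a}]})^{[1/p^{b}]}$ is by definition the smallest ideal $\mathfrak{L}$ such that $J^{[1/p^{a}]} \subseteq \mathfrak{L}^{[p^{b}]}$, and unwinding this using the definition of $J^{[1/p^{a}]}$ shows it is equivalent to requiring $J \subseteq (\mathfrak{L}^{[p^{b}]})^{[p^{a}]} = \mathfrak{L}^{[p^{a+b}]}$. By the uniqueness of the minimal such ideal (which relies on flatness of Frobenius for $R$ regular, as invoked at the start of Section 2), this minimal $\mathfrak{L}$ is $J^{[1/p^{a+b}]}$. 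Once this identity is in hand, combining the two displayed equations above yields the lemma.

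I do not expect any serious obstacle here: the statement is essentially a compatibility between the operator $(-)^{[1/p^{e}]}$ and the scaling $\bs[c] \mapsto \frac{1}{p^{e}}\bs[c]$, and the only non-formal ingredient is the associativity of bracket roots, which is standard in the regular $F$-finite setting. The one thing to be careful about is simply ensuring $e'$ is chosen large enough for both stabilizations at once, which is harmless since the index $e$ in $\tau(\mf[a]^{\frac{1}{p^{e}}\bs[c]})$ is fixed while $e'$ may be taken arbitrarily large.
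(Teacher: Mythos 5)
Your proof is correct and takes essentially the same approach as the paper. Both arguments pick $e'$ large enough to replace the defining unions by a single bracket-root term, observe that $\lceil p^{e+e'}\cdot\frac{1}{p^{e}}c_{i}\rceil=\lceil p^{e'}c_{i}\rceil$, and then reduce to the composition rule for bracket roots; the only cosmetic difference is that you package this rule as a single identity $(J^{[1/p^{a}]})^{[1/p^{b}]}=J^{[1/p^{a+b}]}$ proved directly from the minimality definition, whereas the paper establishes the two containments separately, citing \cite[Lemma~2.4]{BMS} for one direction and rewriting $[1/p^{k}]$ as $[p^{e}/p^{k+e}]$ for the other.
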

\begin{proof}
Taking $k$ large enough 
\[
\tau(\mf[a]^{{\bf c}})^{[1/p^{e}]}=\left((\mf[a]_{1}^{\lceil c{}_{1}p^{k}\rceil}\cdots\mf[a]_{n}^{\lceil c{}_{n}p^{k}\rceil})^{[1/p^{k}]}\right)^{[1/p^{e}]}
\]
and by \cite[Lemma 2.4]{BMS} the later contains 
\[
(\mf[a]_{1}^{\lceil c{}_{1}p^{k}\rceil}\cdots\mf[a]_{n}^{\lceil c{}_{n}p^{k}\rceil})^{[1/p^{k+e}]}=(\mf[a]_{1}^{\lceil\frac{c{}_{1}}{p^{e}}p^{k+e}\rceil}\cdots\mf[a]_{n}^{\lceil\frac{c{}_{n}}{p^{e}}p^{k+e}\rceil})^{[1/p^{k+e}]}
\]
\[
=\tau(\mf[a]^{\frac{1}{p^{e}}\bs[c]}).
\]

Therefore $\tau(\mf[a]^{{\bf c}})^{[1/p^{e}]}\supseteq\tau(\mf[a]^{\frac{1}{p^{e}}\bs[c]}).$

For the other inclusion note that 
\[
\tau(\mf[a]^{{\bf c}})=(\mf[a]_{1}^{\lceil c{}_{1}p^{k}\rceil}\cdots\mf[a]_{n}^{\lceil c{}_{n}p^{k}\rceil})^{[1/p^{k}]}
\]
\[
=(\mf[a]_{1}^{\lceil\frac{c{}_{1}}{p^{e}}p^{k+e}\rceil}\cdots\mf[a]_{n}^{\lceil\frac{c{}_{n}}{p^{e}}p^{k+e}\rceil})^{[p^{e}/p^{k+e}]}
\]
that by \cite[Lemma 2.4]{BMS} is contained in 
\[
\left((\mf[a]_{1}^{\lceil\frac{c{}_{1}}{p^{e}}p^{k+e}\rceil}\cdots\mf[a]_{n}^{\lceil\frac{c{}_{n}}{p^{e}}p^{k+e}\rceil})^{[1/p^{k+e}]}\right)^{[p^{e}]}=\tau(\mf[a]^{\frac{1}{p^{e}}\bs[c]})^{[p^{e}]}
\]
but this is equivalent to say 
\[
\tau(\mf[a]^{{\bf c}})^{[1/p^{e}]}\subseteq\tau(\mf[a]^{\frac{1}{p^{e}}\bs[c]}).
\]
\end{proof}
\begin{lem}
\label{characterisct functions fractal behavior} Let $\boldsymbol{l}=(l_{1},\ldots l_{n})\in\mathbb{Z}^{n}$
be such that $l_{i}$ is the minimum number of generators of the ideal
$\mf[a_{i}]$. Let $\bs[b]\in\mathbb{Z}^{n}$ such that $l_{i}-1\leq b_{i}$
. For all $e$, we have \textup{
\[
T_{p^{e}|\bs[b]}\chi_{\mf[a]}^{I}=T_{p^{0}|(\bs[l]-\bs[1])}\chi_{\mf[a]}^{(I^{[p^{e}]}:\mf[a]^{\bs[b-l+1]})},
\]
}where $\chi_{\mf[a]}^{I}$ denotes the characteristic function introduced
in Definition \ref{Def characteristic function}.\end{lem}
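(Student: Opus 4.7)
The plan is to verify the functional identity pointwise for each $\bs[t] \in \mathbb{R}_{\geq 0}^n$, translating each side back into an ideal containment via Lemma \ref{lem:Description of the approximation}, and then identifying the two resulting conditions using Skoda's theorem.

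Unwinding the operators $T_{q|\bs[b]}$ and applying Lemma \ref{lem:Description of the approximation}, the LHS at $\bs[t]$ equals $1$ precisely when $\tau(\mf[a]^{(\bs[t]+\bs[b])/p^{e}}) \not\subseteq I$. Using Lemma \ref{lemma for the main theorem} to rewrite $\tau(\mf[a]^{(\bs[t]+\bs[b])/p^{e}}) = \tau(\mf[a]^{\bs[t]+\bs[b]})^{[1/p^{e}]}$, together with the defining property of $(-)^{[1/p^{e}]}$ (namely $J^{[1/p^{e}]} \subseteq I \Leftrightarrow J \subseteq I^{[p^{e}]}$), this becomes $\tau(\mf[a]^{\bs[t]+\bs[b]}) \not\subseteq I^{[p^{e}]}$. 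Similarly, the RHS at $\bs[t]$ equals $1$ iff $\tau(\mf[a]^{\bs[t]+\bs[l]-\bs[1]}) \not\subseteq (I^{[p^{e}]} : \mf[a]^{\bs[b-l+1]})$, which by the standard colon adjunction is equivalent to $\mf[a]^{\bs[b-l+1]} \cdot \tau(\mf[a]^{\bs[t]+\bs[l]-\bs[1]}) \not\subseteq I^{[p^{e}]}$.

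Thus the lemma reduces to the ideal identity
$$\tau(\mf[a]^{\bs[t]+\bs[b]}) = \mf[a]^{\bs[b-l+1]} \cdot \tau(\mf[a]^{\bs[t]+\bs[l]-\bs[1]}),$$
which I would establish by iterating Skoda's theorem (Theorem \ref{thm:(Skoda's-Theorem)}) coordinate by coordinate. Since $\mf[a]_{i}$ is minimally generated by $l_{i}$ elements, Skoda lets us replace $\tau(\mf[a]^{\bs[s]})$ by $\mf[a]^{\bs[e_i]}\tau(\mf[a]^{\bs[s]-\bs[e_i]})$ whenever $s_{i} \geq l_{i}$. Starting from $\bs[s] = \bs[t]+\bs[b]$ and decreasing the $i$-th coordinate by one at a time down to $t_{i} + l_{i} - 1$, the hypotheses $t_{i} \geq 0$ and $b_{i} \geq l_{i} - 1$ ensure that at every intermediate step the $i$-th coordinate is still $\geq l_{i}$, so Skoda applies. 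Performing this $b_{i} - l_{i} + 1$ times in each direction $i$ peels off a total factor of $\mf[a]^{\bs[b-l+1]}$ and leaves $\tau(\mf[a]^{\bs[t]+\bs[l]-\bs[1]})$, as required. The only real subtlety is the bookkeeping that confirms each Skoda reduction is legal; once the adjunction $J^{[1/p^{e}]} \subseteq I \Leftrightarrow J \subseteq I^{[p^{e}]}$ is in hand, the rest of the argument is a direct translation.
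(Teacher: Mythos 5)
Your proof is correct and follows essentially the same route as the paper: evaluate both sides pointwise, translate each value into an ideal non-containment via Lemma \ref{lem:Description of the approximation} and Lemma \ref{lemma for the main theorem}, and then match the two conditions by an iterated application of Skoda's theorem. The paper's own proof is organized identically; your write-up is slightly cleaner in that it isolates the ideal identity $\tau(\mf[a]^{\bs[t]+\bs[b]})=\mf[a]^{\bs[b-l+1]}\cdot\tau(\mf[a]^{\bs[t]+\bs[l]-\bs[1]})$ as the crux and spells out the bookkeeping (each intermediate $i$-th exponent $t_i+b_i-(k-1)$ stays $\geq l_i$ because $t_i\geq 0$), whereas the paper appends a somewhat redundant re-verification on points of the form $\frac{1}{p^k}\bs[r]+(\bs[l]-\bs[1])$.
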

\begin{proof}
We have that 
\[
T_{p^{e}|\bs[b]}\chi_{\mf[a]}^{I}(\bs[t])=\chi_{\mf[a]}^{I}\left(\frac{1}{p^{e}}(\bs[t]+\bs[b])\right)
\]

is equal to $1$ if and only if, by Lemma \ref{lem:Description of the approximation},
to 
\[
\tau(\mf[a]^{\frac{1}{p^{e}}(\bs[t]+\bs[b])})\not\subseteq I,
\]

and by Lemma \ref{lemma for the main theorem} this is 
\[
\tau(\mf[a]^{\bs[t]+\bs[b]})^{[1/p^{e}]}\not\subseteq I,
\]

but the later is equivalent to 
\[
\tau(\mf[a]^{\bs[t]+\bs[b]})\not\subseteq I^{[p^{e}]}.
\]

As $b_{i}\geq l_{i}-1$ by Skoda's Theorem the previous expresion
becomes 
\[
\mf[a]^{\bs[b-l+1]}\cdot\tau(\mf[a]^{\bs[t]+\bs[l]-\bs[1]})\not\subseteq I^{[p^{e}]}
\]

Wich in turn is equivalent to 
\[
\tau(\mf[a]^{\bs[t]+\bs[l]-\bs[1]})\not\subseteq(I^{[p^{e}]}:\mf[a]^{\bs[b-l+1]})
\]
 but this is the case if and only if 
\[
T_{p^{0}|(\bs[l]-\bs[1])}\chi_{\mf[a]}^{(I^{[p^{e}]}:\mf[a]^{\bs[b-l+1]})}(\bs[t])=\chi_{\mf[a]}^{(I^{[p^{e}]}:\mf[a]^{\bs[b-l+1]})}(\bs[t]+\bs[l]-\bs[1])
\]

is equal to $1$ 

Note that a point of the form $\frac{1}{p^{k}}\bs[r]+(\boldsymbol{l}-\boldsymbol{1})$
with $\bs[r]\in\mathbb{Z}^{n}$ is in $B^{(I^{[p^{e}]}:\mf[a]^{b-l+1})}(\mf[a])$
if and only if $\mf[a]^{\bs[r]+p^{k}(\bs[l]-\bs[1])}\not\not\subseteq(I^{[p^{e}]}:\mf[a]^{\bs[b]-\bs[l]+\bs[1]})^{[p^{k}]}$
if and only if $\mf[a]^{\bs[r]}\cdot\mf[a]^{p^{k}(\bs[l]-\bs[1])}\cdot(\mf[a]^{\bs[b]-\bs[l]+\bs[1]})^{[p^{k}]}\not\subseteq I^{[p^{e+k}]}$
this by Lemma \ref{lemma for the main theorem} occurs if and only
if $\mf[a]^{\bs[r]+p^{k}\bs[b]}\not\subseteq I^{[p^{e+k}]}$, or equivalently
$\frac{1}{p^{e+k}}\bs[r]+\frac{1}{p^{e}}\bs[b]\in B^{I}(\mf[a])$.
From this the result follows easily. 
\end{proof}
This lemma is especially useful when the ideals are principal, as
we will see in the examples of Section 5. 
\begin{lem}
\label{lem:In-each-hypercube} In each hypercube $[\bs[0],\bs[l]]$
there are only finitely many functions $\chi_{\mf[a]}^{I}$. That
is, the set $\{\chi_{\mf[a]}^{I}|_{[\bs[0,l]]};I\subseteq R\}$ is
finite. \end{lem}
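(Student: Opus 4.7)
The plan is to reduce the problem to a finite combinatorial question by observing that on the hypercube $[\boldsymbol{0},\boldsymbol{l}]$, only finitely many test ideals occur as $\tau(\mathfrak{a}^{\boldsymbol{c}})$ varies.

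First I would invoke Proposition \ref{Finite many test ideals}, which tells us that the set $\mathcal{A} = \{\tau(\mathfrak{a}^{\boldsymbol{c}}) : \boldsymbol{c} \in [\boldsymbol{0}, \boldsymbol{l}]\}$ is finite, say $\mathcal{A} = \{J_{1}, \ldots, J_{m}\}$. This is where the hypothesis that $R$ is essentially of finite type over a finite field enters; it is the key ingredient.

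Next I would use Lemma \ref{lem:Description of the approximation} to rewrite $\chi_{\mathfrak{a}}^{I}$ in terms of test ideals: for every $\boldsymbol{c} \in [\boldsymbol{0}, \boldsymbol{l}]$ and every ideal $I \subseteq R$,
\[
\chi_{\mathfrak{a}}^{I}(\boldsymbol{c}) = 1 \iff \tau(\mathfrak{a}^{\boldsymbol{c}}) \not\subseteq I.
\]
Thus the value of $\chi_{\mathfrak{a}}^{I}(\boldsymbol{c})$ depends on $I$ only through the collection $S(I) := \{k \in \{1,\ldots,m\} : J_{k} \subseteq I\}$, because once we know which $J_{k}$ equals $\tau(\mathfrak{a}^{\boldsymbol{c}})$, the containment question is decided by $S(I)$.

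Concretely, if $I$ and $I'$ are two ideals with $S(I) = S(I')$, then for every $\boldsymbol{c} \in [\boldsymbol{0}, \boldsymbol{l}]$, writing $\tau(\mathfrak{a}^{\boldsymbol{c}}) = J_{k}$ we have $J_{k} \subseteq I \iff J_{k} \subseteq I'$, so $\chi_{\mathfrak{a}}^{I}|_{[\boldsymbol{0},\boldsymbol{l}]} = \chi_{\mathfrak{a}}^{I'}|_{[\boldsymbol{0},\boldsymbol{l}]}$. Since $S(I)$ is a subset of the $m$-element set $\{1,\ldots,m\}$, there are at most $2^{m}$ possibilities, so the collection $\{\chi_{\mathfrak{a}}^{I}|_{[\boldsymbol{0},\boldsymbol{l}]} : I \subseteq R\}$ is finite. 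There is no real obstacle here once Proposition \ref{Finite many test ideals} is in hand; the argument is essentially a bookkeeping reduction to finitely many containment patterns among the finitely many test ideals on the hypercube.
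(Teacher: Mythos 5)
Your proof is correct and follows the same route as the paper's: both rest on Proposition \ref{Finite many test ideals} for the finiteness of test ideals on $[\boldsymbol{0},\boldsymbol{l}]$ and on Lemma \ref{lem:Description of the approximation} to express $\chi_{\mathfrak{a}}^{I}$ via the non-containment $\tau(\mathfrak{a}^{\boldsymbol{c}})\not\subseteq I$. The paper phrases it as ``$B^{I}(\mathfrak{a})$ is a union of the finitely many constancy regions,'' while you make the counting explicit with the containment pattern $S(I)$ and the bound $2^{m}$, but the argument is the same.
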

\begin{proof}
By Lemma \ref{lem:Description of the approximation}, $B^{I}(\mf[a])$
is the set of all points $\bs[c]=(c_{1},\ldots,c_{n})\in\mathbb{R}_{\geq0}^{n}$
such that $\tau(\mf[a]^{\bs[c]})\not\subseteq I$, hence $B^{I}(\mf[a])$
is a union of constancy regions. By Lemma \ref{Finite many test ideals},
we know that there are only finitely many constancy regions for bounded
exponents, therefore there are only finitely many functions $\chi_{\mf[a]}^{I}|_{[\bs[0],\bs[l]]}$. \end{proof}
\begin{thm}
\textup{\label{thm:Main Theorem}There is a $p$-fractal function
$\varphi:\mathbb{R}_{\geq0}^{n}\rightarrow\mathbb{N}$ for which 
\[
\tau(\mathfrak{a}_{1}^{c_{1}}...\mathfrak{a}_{n}^{c_{n}})=\tau(\mathfrak{a}_{1}^{d_{1}}...\mathfrak{a}_{n}^{d_{n}})\Longleftrightarrow\varphi(c_{1},...,c_{n})=\varphi(d_{1},...,d_{n}),
\]
}and therefore the constancy regions are of the form $\varphi^{-1}(i)$
for some number $i.$\end{thm}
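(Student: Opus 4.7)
The plan is to construct $\varphi$ as an indexing of the countable family of test ideals and then to verify the $p$-fractal property locally. Since $\mathbb{R}_{\geq0}^{n}=\bigcup_{N\in\mathbb{N}}[\boldsymbol{0},N\boldsymbol{1}]$ and each $[\boldsymbol{0},N\boldsymbol{1}]$ contributes only finitely many test ideals by Proposition \ref{Finite many test ideals}, the set $\mathcal{T}=\{\tau(\mathfrak{a}^{\boldsymbol{c}}):\boldsymbol{c}\in\mathbb{R}_{\geq0}^{n}\}$ is countable. Fix any enumeration $\mathcal{T}=\{J_{1},J_{2},\ldots\}$ and declare $\varphi(\boldsymbol{c})=i$ when $\tau(\mathfrak{a}^{\boldsymbol{c}})=J_{i}$. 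Then $\tau(\mathfrak{a}^{\boldsymbol{c}})=\tau(\mathfrak{a}^{\boldsymbol{d}})\Longleftrightarrow\varphi(\boldsymbol{c})=\varphi(\boldsymbol{d})$ is tautological, and the constancy regions are precisely the fibers $\varphi^{-1}(i)$. What remains is to verify the $p$-fractal property.

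To that end, I would fix a hypercube $[\boldsymbol{0},\boldsymbol{l}]$ and view $\varphi|_{[\boldsymbol{0},\boldsymbol{l}]}$ extended by zero as an element of $\mathcal{F}$. By Remark \ref{Regions} each constancy region inside the hypercube is a Boolean combination of finitely many sets $B^{I}(\mathfrak{a})\cap[\boldsymbol{0},\boldsymbol{l}]$, so the indicator of each such region, and hence $\varphi|_{[\boldsymbol{0},\boldsymbol{l}]}$, is a polynomial in the characteristic functions $\chi_{\mathfrak{a}}^{I}|_{[\boldsymbol{0},\boldsymbol{l}]}$. Thus $\varphi|_{[\boldsymbol{0},\boldsymbol{l}]}$ lies in the $\mathbb{Q}$-subalgebra $\mathcal{A}\subseteq\mathcal{F}$ generated by the finite family $\{\chi_{\mathfrak{a}}^{I}|_{[\boldsymbol{0},\boldsymbol{l}]}:I\subseteq R\}$ produced by Lemma \ref{lem:In-each-hypercube}; since its generators are idempotent $\{0,1\}$-valued functions, $\mathcal{A}$ is a finite-dimensional $\mathbb{Q}$-vector space.

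Each operator $T_{q|\boldsymbol{b}}$ is pullback by an affine map, hence a ring homomorphism on $\mathcal{F}$, so every $T_{q|\boldsymbol{b}}\varphi|_{[\boldsymbol{0},\boldsymbol{l}]}$ is a polynomial in the functions $T_{q|\boldsymbol{b}}\chi_{\mathfrak{a}}^{I}|_{[\boldsymbol{0},\boldsymbol{l}]}$. I would then show that only finitely many such restrictions occur as $(I,q,\boldsymbol{b})$ varies. For $\boldsymbol{b}$ with $b_{i}\geq l_{i}-1$ in every coordinate, Lemma \ref{characterisct functions fractal behavior} equates $T_{q|\boldsymbol{b}}\chi_{\mathfrak{a}}^{I}$ with the translate by $\boldsymbol{l}-\boldsymbol{1}$ of $\chi_{\mathfrak{a}}^{(I^{[q]}:\mathfrak{a}^{\boldsymbol{b}-\boldsymbol{l}+\boldsymbol{1}})}$, and applying Lemma \ref{lem:In-each-hypercube} on the shifted hypercube $[\boldsymbol{l}-\boldsymbol{1},2\boldsymbol{l}-\boldsymbol{1}]$ leaves only finitely many possibilities. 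The remaining $\boldsymbol{b}$'s, those with $b_{i}<l_{i}-1$ in some coordinate, I would handle by applying Skoda's theorem only in the coordinates where its hypothesis holds, which reduces $\boldsymbol{b}$ to a $\boldsymbol{b}'$ that is coordinate-wise bounded by $\boldsymbol{l}-\boldsymbol{1}$; for such bounded $\boldsymbol{b}'$ the argument $(\boldsymbol{t}+\boldsymbol{b}')/q$ collapses into an arbitrarily small neighborhood of $\boldsymbol{0}$ as $q\to\infty$, where by Proposition \ref{Constancy in positive directions} the test ideal is constant equal to $R$, so $T_{q|\boldsymbol{b}}\chi_{\mathfrak{a}}^{I}|_{[\boldsymbol{0},\boldsymbol{l}]}$ stabilizes for large $q$ and only finitely many values remain. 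With the full family being finite and idempotent, the $\mathbb{Q}$-algebra they generate is finite-dimensional; it contains every $T_{q|\boldsymbol{b}}\varphi|_{[\boldsymbol{0},\boldsymbol{l}]}$, proving $\varphi$ is a $p$-fractal.

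The main obstacle is precisely this case analysis: Lemma \ref{characterisct functions fractal behavior} is tailored to the Skoda-friendly range where $b_{i}\geq l_{i}-1$ in every coordinate, while the range where some $b_{i}$ is small must be absorbed either by a refined coordinate-wise version of that lemma or by the stabilization-at-the-origin argument sketched above. Once this auxiliary finiteness statement for the family $\{T_{q|\boldsymbol{b}}\chi_{\mathfrak{a}}^{I}|_{[\boldsymbol{0},\boldsymbol{l}]}\}$ is secured, the fractal conclusion is formal.
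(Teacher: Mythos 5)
Your overall strategy matches the paper's: enumerate the (countably many) test ideals, take the resulting indexing function $\varphi$, reduce the $p$-fractal property of $\varphi$ to that of the characteristic functions $\chi_{\mathfrak{a}}^{I}$ via Remark \ref{Regions} and Proposition \ref{Finite many test ideals}, and control the operators $T_{q\mid\boldsymbol{b}}$ via Lemma \ref{characterisct functions fractal behavior} and Lemma \ref{lem:In-each-hypercube}. The paper packages the last step by showing each indicator $\eta_{\boldsymbol{c}}$ is $p$-fractal and writing $\varphi=\sum_{i}i\cdot\eta_{\boldsymbol{c}_{i}}$; your phrasing via the finite-dimensional $\mathbb{Q}$-algebra generated by idempotents and the fact that $T_{q\mid\boldsymbol{b}}$ is a ring homomorphism is an equivalent but cleaner way to say the same thing, and is a legitimate small improvement in exposition.

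Where you add real content is in flagging that Lemma \ref{characterisct functions fractal behavior} only covers $\boldsymbol{b}\geq\boldsymbol{l}-\boldsymbol{1}$ coordinate-wise, whereas the paper's proof asserts without justification that ``all but finitely many'' of the $T_{q\mid\boldsymbol{b}}\chi_{\mathfrak{a}}^{I}$ fall in that range. For principal ideals ($l_{i}=1$) there is nothing to check, but in general there are infinitely many pairs $(q,\boldsymbol{b})$ with some $b_{i}<l_{i}-1$, so this is a genuine gap in the paper that you were right to notice. However, your proposed fix for that remainder does not quite work as stated. Applying Skoda only in the coordinates where $b_{i}\geq l_{i}-1$ turns $T_{q\mid\boldsymbol{b}}\chi_{\mathfrak{a}}^{I}(\boldsymbol{t})$ into $\bigl[\tau(\mathfrak{a}^{\boldsymbol{t}+\boldsymbol{b}'})\not\subseteq(I^{[q]}:\mathfrak{a}^{\boldsymbol{b}-\boldsymbol{b}'})\bigr]$, which is $T_{1\mid\boldsymbol{b}'}\chi_{\mathfrak{a}}^{K}(\boldsymbol{t})$ with $K=(I^{[q]}:\mathfrak{a}^{\boldsymbol{b}-\boldsymbol{b}'})$, not $T_{q\mid\boldsymbol{b}'}\chi_{\mathfrak{a}}^{J}$ for any fixed $J$: the $1/q$ rescaling is absorbed into the Frobenius bracket. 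So the ``argument collapses to a neighborhood of $\boldsymbol{0}$'' step has nothing to bite on, and invoking Proposition \ref{Constancy in positive directions} there is a non sequitur. The correct way to finish your (right-headed) Skoda reduction is simpler: once $\boldsymbol{b}'$ is coordinate-wise bounded by $\boldsymbol{l}-\boldsymbol{1}$ and $\boldsymbol{t}$ ranges over a fixed hypercube, $\boldsymbol{t}+\boldsymbol{b}'$ stays in a fixed bounded box, so Lemma \ref{lem:In-each-hypercube} (applied to that shifted box) shows the functions $\boldsymbol{t}\mapsto\chi_{\mathfrak{a}}^{K}(\boldsymbol{t}+\boldsymbol{b}')$ take only finitely many values as $K$ varies, which is exactly the finiteness you need. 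Replace the collapse-at-the-origin argument with that observation and the proof is complete.
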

\begin{proof}
We first show that the functions $\chi_{\mf[a]}^{I}$ are $p$-fractal.
We want to prove that all the $T_{p^{e}|\bs[b]}\chi_{\mf[a]}^{I}$
span a finite dimensional space. Lemma \ref{characterisct functions fractal behavior}
states that all but finitely many of these functions have the form
$T_{p^{0}|(\bs[l]-\bs[1])}\chi_{\mf[a]}^{J}$ for different ideals
$J$. Lemma \ref{lem:In-each-hypercube} ensures that there are only
finitely many of those in each hypercube $[\bs[0],\bs[l]]$. From
this it follows that $\chi_{\mf[a]}^{I}$ is a $p$-fractal.

For $\bs[c]\in\mathbb{R}_{\geq0}^{n}$, let $\eta_{\bs[c]}$ be the
characteristic function associated to the constancy region $\tau(\mf[a]^{\bs[c]})$.
Remark \ref{Regions} implies that that in each hypercube $[\bs[0],\bs[l]]$,
$\eta_{\bs[c]}|_{[\bs[0],\bs[l]]}=\left(\chi_{\mf[a]}^{I_{1}}\cdots\chi_{\mf[a]}^{I_{d}}-\chi_{\mf[a]}^{J}\right)\big|{}_{[\bs[0],\bs[l]]}$
for some ideals $I_{1},\ldots,I_{d}$ and $J$, therefore $\eta_{\bs[c]}$
is $p$-fractal. 

Clearly there are countably many constancy regions, so we can numerate
them. For every $i$, let $\bs[c]_{i}=(c_{i1},\ldots,c_{in})$ a point
in the $i$-th constancy region, and we define 
\[
\varphi=\sum_{i\in\mathbb{N}}i\cdot\eta_{\bs[c]_{i}}.
\]
This function satisfies the desired conditions. \end{proof}
\begin{cor}
\label{cor-caracteristic functions of constancy regions are p-fractal}Let
$\eta_{\bs[c]}$ be the characteristic function associated to the
constancy region of $\tau(\mf[a]^{\bs[c]})$, then $\eta_{\bs[c]}$
is a $p$- fractal. 
\end{cor}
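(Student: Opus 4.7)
The plan is to extract the observation already implicit in the proof of Theorem \ref{thm:Main Theorem}: the function $\eta_{\bs[c]}$ is built from the indicator functions $\chi_{\mf[a]}^I$ in a way that preserves the $p$-fractal property. So the strategy is (i) show that each $\chi_{\mf[a]}^I$ is a $p$-fractal, and (ii) show that the collection of $p$-fractals is closed under the finite sums and products needed to reconstruct $\eta_{\bs[c]}$ from the $\chi_{\mf[a]}^I$.

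For step (i), I would fix a hypercube $[\bs[0],\bs[l]]$ and consider the span of all $T_{q|\bs[b]}\chi_{\mf[a]}^I$ as $q=p^e$ and $\bs[b]$ vary. By Lemma \ref{characterisct functions fractal behavior}, provided each $b_i \ge l_i-1$ (which excludes only finitely many choices of $\bs[b]$ for each fixed $q$, and in fact all but finitely many when we consider all $q$), each such operator output equals $T_{p^0|(\bs[l]-\bs[1])}\chi_{\mf[a]}^{J}$ for some ideal $J$ (depending on $I$, $e$, and $\bs[b]$). Lemma \ref{lem:In-each-hypercube} then says that, restricted to any bounded hypercube, the family $\{\chi_{\mf[a]}^{J}\}_{J}$ produces only finitely many distinct functions. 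Combining these two facts, the span of all $T_{q|\bs[b]}\chi_{\mf[a]}^I$ on $[\bs[0],\bs[l]]$ is finite-dimensional, so $\chi_{\mf[a]}^I$ is a $p$-fractal.

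For step (ii), I invoke Remark \ref{Regions}: on each hypercube $[\bs[0],\bs[l]]$ the constancy region attached to $\bs[c]$ has the form $\bigcap_{i=1}^{d} B^{I_i}(\mf[a]) \setminus B^J(\mf[a])$ for suitable ideals $I_1,\dots,I_d,J$. Passing to indicator functions, this yields an identity of the shape
\[
\eta_{\bs[c]}\big|_{[\bs[0],\bs[l]]} = \bigl(\chi_{\mf[a]}^{I_1}\cdots\chi_{\mf[a]}^{I_d}\,(1-\chi_{\mf[a]}^{J})\bigr)\big|_{[\bs[0],\bs[l]]},
\]
a polynomial in the $\chi_{\mf[a]}^{I_i}$ and $\chi_{\mf[a]}^{J}$. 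The main small observation needed is that $p$-fractality is preserved by pointwise sums and products, since each $T_{q|\bs[b]}$ is an algebra endomorphism of $\mathcal{F}$ (it is essentially a pullback by an affine map), so if $V_1, V_2$ are finite-dimensional subspaces containing all $T_{q|\bs[b]}\phi_1$ and $T_{q|\bs[b]}\phi_2$ respectively, then the (finite-dimensional) subspace spanned by products of basis elements contains all $T_{q|\bs[b]}(\phi_1\phi_2)$, and similarly for sums. Applying this closure property to the expression above yields that $\eta_{\bs[c]}$ is a $p$-fractal on each hypercube, which by definition gives the conclusion.

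The only step that requires any actual verification is the algebra-closure statement for $p$-fractals; this is essentially formal from the definition of the $T_{q|\bs[b]}$ as composition operators, and I would expect it to take only a couple of lines. Everything else is immediate from results already proven.
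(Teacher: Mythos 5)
Your proposal is correct and follows the same route as the paper, which proves this corollary inside the proof of Theorem \ref{thm:Main Theorem} by exactly the two steps you give: fractality of each $\chi_{\mf[a]}^{I}$ via Lemma \ref{characterisct functions fractal behavior} and Lemma \ref{lem:In-each-hypercube}, and then the decomposition $\eta_{\bs[c]}|_{[\bs[0],\bs[l]]}=\bigl(\chi_{\mf[a]}^{I_{1}}\cdots\chi_{\mf[a]}^{I_{d}}-\chi_{\mf[a]}^{J}\bigr)|_{[\bs[0],\bs[l]]}$ coming from Remark \ref{Regions}. Your variant $\chi_{\mf[a]}^{I_{1}}\cdots\chi_{\mf[a]}^{I_{d}}(1-\chi_{\mf[a]}^{J})$ agrees with the paper's formula because $B^{J}(\mf[a])\subseteq\bigcap_{i}B^{I_{i}}(\mf[a])$ by the choice of $I_{i}$ and $J$ in the proof of the preceding theorem, and you usefully spell out the closure of $p$-fractals under pointwise sums and products via the fact that each $T_{q|\bs[b]}$ is an algebra endomorphism of $\mathcal{F}$, a step the paper leaves implicit.
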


\section{An Example}

In section 4 we showed that the characteristic functions of the constancy
regions are $p$-fractal functions, Corollary \ref{cor-caracteristic functions of constancy regions are p-fractal}.
We use this fact and Proposition \ref{thm: root ideals} to compute
an explicit example. Throughout this section we use a subscript $*_{p}$
to denote that the number is written in base $p$. One of the main
tools for computing examples is the following theorem:
\begin{thm}
\label{thm:(Lucas'-Theorem)}(Lucas' Theorem \cite{E}) Fix non-negative
integers $m\geq n\in\mathbb{N}$ and a prime number $p$. Write $m$
and $n$ in their base $p$ expansions: $m=\sum_{j=0}^{r}m_{j}p^{j}$
and $n=\sum_{j=0}^{r}n_{j}p^{j}.$Then modulo $p,$
\[
\binom{m}{n}=\binom{m_{0}}{n_{0}}\cdot\binom{m_{1}}{n_{1}}\cdots\binom{m_{r}}{n_{r}},
\]
where we interpret $\binom{a}{b}$ as zero if $a<b$. In particular,
$\binom{m}{n}$ is non-zero mod $p$ if and only if $m_{j}\geq n_{j}$
for all $j=1,\ldots r$. \end{thm}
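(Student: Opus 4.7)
The plan is to prove Lucas' Theorem by a generating-function argument in the polynomial ring $\mathbb{F}_p[X]$, exploiting the Frobenius endomorphism.

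First, I would invoke the ``freshman's dream'' in characteristic $p$: since $\binom{p}{k} \equiv 0 \pmod{p}$ for $1 \leq k \leq p-1$, we have $(1+X)^p \equiv 1 + X^p \pmod{p}$ in $\mathbb{Z}[X]$. Iterating this identity $j$ times yields $(1+X)^{p^j} \equiv 1 + X^{p^j} \pmod{p}$ for every $j \geq 0$.

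Next, writing $m = \sum_{j=0}^{r} m_j p^j$ with $0 \leq m_j < p$, I would split the binomial expansion accordingly:
\[
(1+X)^m \;=\; \prod_{j=0}^{r} (1+X)^{m_j p^j} \;\equiv\; \prod_{j=0}^{r} \bigl(1+X^{p^j}\bigr)^{m_j} \pmod{p}.
\]
Expanding each factor with the standard binomial theorem gives
\[
\prod_{j=0}^{r} \sum_{k_j=0}^{m_j} \binom{m_j}{k_j} X^{k_j p^j}.
\]
On the other hand, the left-hand side is $\sum_{n} \binom{m}{n} X^{n}$. The key observation is that, because $0 \leq k_j \leq m_j < p$, each tuple $(k_0,\ldots,k_r)$ yields a distinct exponent $\sum k_j p^j$ (this is the uniqueness of the base-$p$ expansion), so there is no collapsing of terms. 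Therefore, writing $n = \sum n_j p^j$ in base $p$, comparison of coefficients of $X^n$ gives
\[
\binom{m}{n} \;\equiv\; \prod_{j=0}^{r} \binom{m_j}{n_j} \pmod{p},
\]
with the convention that $\binom{m_j}{n_j} = 0$ when $n_j > m_j$. The final clause of the theorem then follows because a product of integers in $\{0,1,\ldots,p-1\}$ is nonzero modulo $p$ precisely when no factor vanishes, i.e.\ when $m_j \geq n_j$ for all $j$.

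There is no real obstacle here; the only subtle point is making explicit that base-$p$ uniqueness is what allows coefficient extraction on both sides without any modular collapse, and that when $n > m$ some $n_j$ exceeds the corresponding $m_j$, so the formula consistently recovers $\binom{m}{n} \equiv 0 \pmod p$.
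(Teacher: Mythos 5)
The paper states Lucas' Theorem as a cited classical result (referencing Lucas's 1878 paper \cite{E}) and gives no proof of its own, so there is nothing in the text to compare your argument against. Your generating-function proof in $\mathbb{F}_p[X]$ — reducing $(1+X)^m$ via the Frobenius identity $(1+X)^{p^j}\equiv 1+X^{p^j}$, expanding $\prod_j(1+X^{p^j})^{m_j}$, and extracting the coefficient of $X^n$ using uniqueness of base-$p$ digits $0\le k_j\le m_j<p$ — is correct and complete; it is in fact the standard modern proof of Lucas' Theorem. One small imprecision: the factors $\binom{m_j}{n_j}$ need not lie in $\{0,\ldots,p-1\}$; what you actually need (and what is true, since $m_j<p$ forces $p\nmid\binom{m_j}{n_j}$ whenever $m_j\ge n_j$) is that each nonvanishing factor is a unit mod $p$, so the product is nonzero mod $p$ iff every $m_j\ge n_j$.
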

\begin{rem}
\label{nonzero coefficients}In particular if $m=p^{k}-1$ all the
coefficients in the expansion of $(x+y)^{m}$ are nonzero. \end{rem}
\begin{example}
(The Devil's Staircase) Let $R=\mathbb{F}_{3}[x,y]$, $f_{1}=x+y$,
and $f_{2}=xy$. We want to describe the constancy regions for the
test ideals $\tau(f^{\boldsymbol{c}})$. 

We first show that there are five different test ideals in the region
$[0,1]\times[0,1]$. More precisely, we show that 
\[
\tau(f^{\boldsymbol{c}})=\begin{cases}
R\mbox{ or }(x,y), & \mbox{\ensuremath{\boldsymbol{c}\in[0,1)\times[0,1)}}\\
(x+y), & \boldsymbol{c}\in\{1\}\times[0,1)\\
(xy), & \boldsymbol{c}\in[0,1)\times\{1\}\\
(xy(x+y)), & \boldsymbol{c}=(1,1).
\end{cases}
\]

We want to compute the test ideal at $(\frac{1}{3},\frac{2}{3})$
. By Proposition \ref{Mixed test ideals =00003D normal test ideals}
\[
\tau(f^{(0.1_{3},0.2_{3})})=\tau((f_{1}\cdot f_{2}^{2})^{\frac{1}{3}}).
\]
 By Proposition \ref{Computing test ideals for principal ideals},
\[
\tau((f_{1}\cdot f_{2}^{2})^{\frac{1}{3}})=((x+y)(xy)^{2})^{[\frac{1}{3}]}=(x^{3}y^{2}+x^{2}y^{3})^{[\frac{1}{3}]}.
\]
Finally, Proposition \ref{thm: root ideals} gives 
\[
(x^{3}y^{2}+x^{2}y^{3})^{[\frac{1}{3}]}=(x,y),
\]
and therefore 
\[
\tau(f_{1}^{c_{1}}\cdot f_{2}^{c_{2}})\subseteq(x,y)\mbox{ if \ensuremath{c_{1}\geq1/3}and \ensuremath{c_{2}\geq2/3}.}
\]
In particular, the test ideal associated to the points $(1-\frac{1}{3^{k}},1-\frac{1}{3^{k}})$
is contained in $(x,y)$. Now 
\[
\tau(f^{(1-\frac{1}{3^{k}},1-\frac{1}{3^{k}})})=((x+y)^{3^{k}-1}(xy)^{3^{k}-1})^{[\frac{1}{3^{k}}]}
\]
\[
=((x^{2}y+xy^{2})^{3^{k}-1})^{[\frac{1}{3^{k}}]}.
\]

Since the terms $x^{2(3^{k}-1)}y^{3^{k}-1}$and $x^{3^{k}-1}y^{2(3^{k}-1)}$appear
in the expansion of $(x^{2}y+xy^{2})^{3^{k}-1}$ with nonzero coefficient,
Remark \ref{nonzero coefficients}. We conclude that $\tau(f^{(1-\frac{1}{3^{k}},1-\frac{1}{3^{k}})})\supseteq(x,y).$
Therefore 
\[
\tau(f^{(1-\frac{1}{3^{k}},1-\frac{1}{3^{k}})})=(x,y).
\]

Thus there are only two test ideals in the region $[0,1)\times[0,1)$,
these are $R$ and $(x,y)$.

Clearly $\tau(f^{(1,0)})=(x+y)$, and by Skoda's Theorem 
\[
\tau(f^{(1,1-\frac{1}{3^{k}})})=f_{1}\cdot\tau(f^{(0,1-\frac{1}{3^{k}})})
\]
\[
=(x+y)\cdot((xy)^{3^{k}-1})^{[\frac{1}{3^{k}}]}
\]
\[
=(x+y),
\]

hence the only test ideal in the region $[0,1)\times\{1\}$ is $(x+y)$.

In a similar way, $\tau(f^{(0,1)})=(xy)$ and 
\[
\tau(f^{(1-\frac{1}{3^{k}},1)})=f_{2}\cdot\tau(f^{(1-\frac{1}{3^{k}},0)})
\]
\[
=(xy)\cdot((x+y)^{3^{k}-1})^{[\frac{1}{3^{k}}]}
\]
\[
=(xy).
\]

Thus $(xy)$ is the only test ideal that appears in the region$\{1\}\times[0,1)$.

Lastly, note that the test ideal at $(1,1)$ is 
\[
\tau(f^{(1,1)})=((x+y)xy).
\]

We now show that $(\frac{1}{3},\frac{2}{3})$ is a point in the boundary
of $B^{(x,y)}(f)$ and then use the $p$-fractal structure to sketch
the constancy regions. 

For every $k$ 
\[
\tau(f^{(\frac{1}{3}-\frac{1}{3^{k}},\frac{2}{3}-\frac{1}{3^{k}})})
\]
\[
=((x+y)^{3^{k-1}-1}(xy)^{2\cdot3^{k-1}-1})^{[\frac{1}{3^{k}}]}.
\]
But in the expansion of $(x+y)^{3^{k-1}-1}$ every term appears with
nonzero coefficient, Remark \ref{nonzero coefficients}. In particular
the term $(xy)^{\frac{3^{k-1}-1}{2}}(xy)^{2\cdot3^{k-1}-1}$ appears
with non-zero coefficient when expanding the product $(x+y)^{3^{k-1}-1}(xy)^{2\cdot3^{k-1}-1}$.
Since the degrees in $x$ and $y$ of this monomial are smaller than
$3^{k}$, by Proposition \ref{thm: root ideals} we conclude that
$\tau(f^{(\frac{1}{3}-\frac{1}{3^{k}},\frac{2}{3}-\frac{1}{3^{k}})})=R$
. Thus 
\[
\chi_{f}^{(x,y)}(\frac{1}{3},\frac{2}{3})=0
\]

and 
\[
\chi_{f}^{(x,y)}([0,\frac{1}{3})\times[0,\frac{2}{3}))=1.
\]

The later shows that the point $(\frac{1}{3},\frac{2}{3})$ is in
the boundary of constancy regions for $R$ and $(x,y)$. We can use
the $p$-fractal structure to find more points in this boundary. The
idea is to break the region $[0,1]\times[0,1]$ into squares of length
$1/3$ and find which of these must contain a boundary point. Then
we apply the $p$-fractal structure to these squares to find the points. 

For the points $(0,\frac{2}{3})$, $(\frac{2}{3},\frac{1}{3})$, $(\frac{1}{3},1)$,
and $(1,\frac{2}{3})$ we have:
\[
\tau(f^{(0,\frac{2}{3})})=((xy)^{2})^{[\frac{1}{3}]}=R,
\]
\[
\tau(f^{(\frac{2}{3},\frac{1}{3})})=((x+y)^{2}xy)^{[\frac{1}{3}]}=(x^{3}y-x^{2}y^{2}+xy^{3})^{[\frac{1}{3}]}=R
\]

and 
\[
\tau(f^{(\frac{1}{3},1)})=((x+y)(xy)^{3})^{[\frac{1}{3}]}=(xy)\subset(x,y),
\]
\[
\tau(f^{(1,\frac{2}{3})})=((x+y)^{3}x^{2}y^{2})^{[\frac{1}{3}]}=(x+y)\subset(x,y).
\]

Therefore there should be boundary points in the squares $[0,1/3)\times[2/3,1)$
and $[2/3,1)\times[0,1/3)$. Is easy to check that there are not boundary
points in all the other squares. From this and lemma \ref{characterisct functions fractal behavior}
we know that $T_{3|(0,2)}\chi_{f}^{(x,y)}=T_{3|(2,1)}\chi_{f}^{(x,y)}=\chi_{f}^{(x,y)}$,
since $\chi_{f}^{(x,y)}$ is the only characteristic function that
is non constant in $[0,1)\times[0,1)$. Moreover,
\[
\chi_{f}^{(x,y)}(0.01_{3},0.22_{3})=\chi_{f}^{(x,y)}(0_{3}+0.01_{3},0.2_{3}+0.02_{3})
\]
\[
=\chi_{f}^{(x,y)}(\frac{0_{3}+0.1_{3}}{3},\frac{2_{3}+0.2_{3}}{3})=T_{3|(0,2)}\chi_{f}^{(x,y)}(0.1_{3},0.2_{3})
\]
\[
=\chi_{f}^{(x,y)}(0.1_{3},0.2_{3})=\chi_{f}^{(x,y)}(\frac{1}{3},\frac{2}{3})=0
\]
in a similar way 
\[
\chi_{f}^{(x,y)}(0.21_{3},0.12_{3})=0
\]
and 
\[
\chi_{f}^{(x,y)}([0,0.01_{3})\times[0,0.22_{3}))=\chi_{f}^{(x,y)}([0,0.21_{3})\times[0,0.12_{3}))=1.
\]
This is the points $(0.01_{3},0.22_{3})$ and $(0.21_{3},0.12_{3})$
are also in the boundary. We can repeat the proccess by subdividing
the squares $[0,1/3)\times[2/3,1)$ and $[2/3,1)\times[0,1/3)$ into
smaller squares of length $1/9$ and obtain more points of the boundary.
This process can be sumarized as follows. Let $A$ is the set of points
obtained from $(0.1_{3},0.2_{3})$ by successively applying the operations
\[
(0.a_{1}\ldots a_{n}1_{3},0.b_{1}\ldots b_{n}2_{3})\mapsto\begin{cases}
(0.a_{1}\ldots a_{n}01_{3},0.b_{1}\ldots b_{n}22_{3})\\
(0.a_{1}\ldots a_{n}21,0.b_{1}\ldots b_{n}12_{3})
\end{cases}
\]
then
\[
\chi_{f}^{(x,y)}(\boldsymbol{p})=0
\]
and 
\[
\chi_{f}^{(x,y)}([\boldsymbol{0},\boldsymbol{p}))=1
\]
for all $\boldsymbol{p}\in A$. This is, the points of $A$ are points
in the boundary. We can now sketch the regions of constancy in $[0,1]\times[0,1]$:
\end{example}
\begin{center}

\includegraphics[scale=0.28]{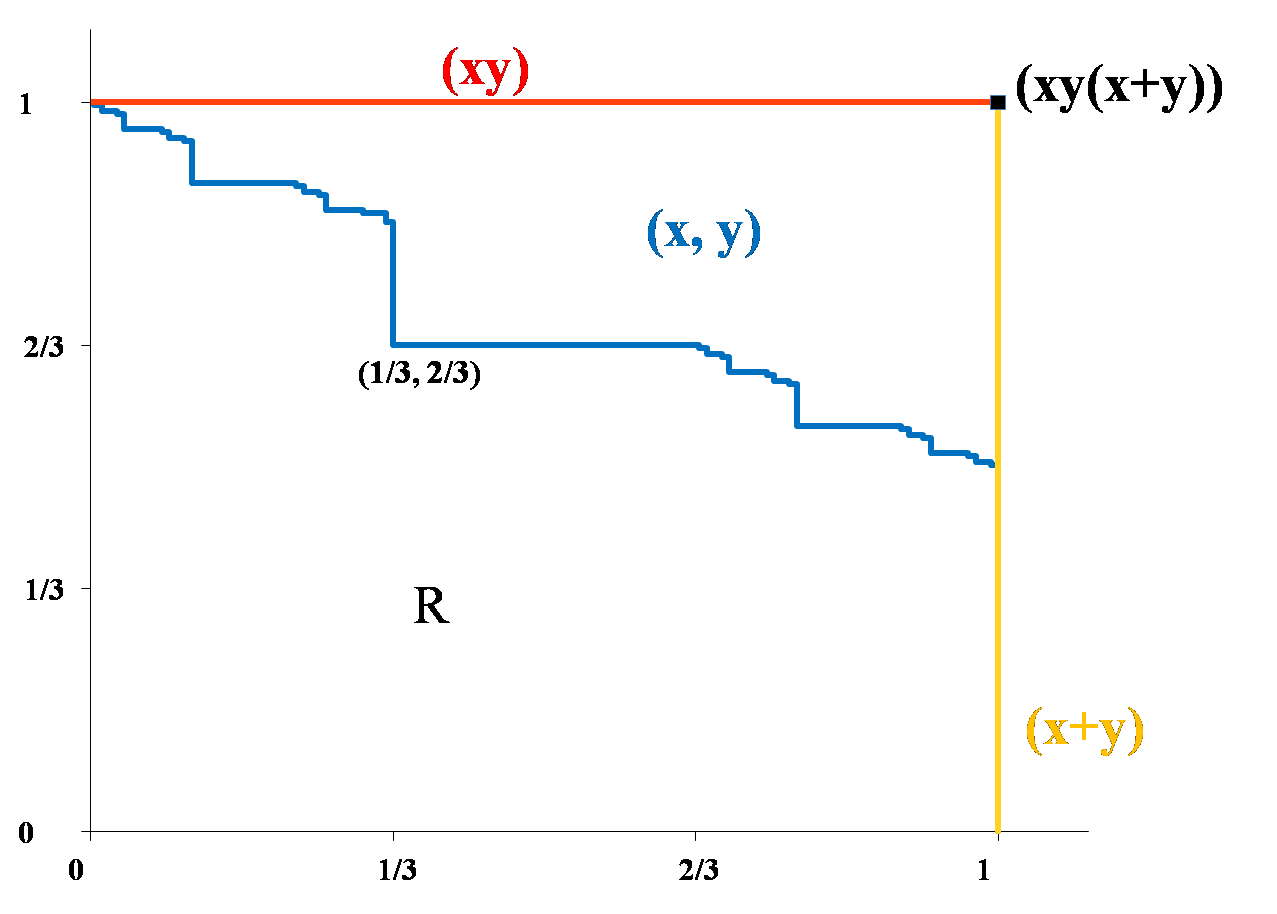}\end{center}

Using Skoda's theorem, we can describe the whole diagram of test ideals:

\begin{center}

\includegraphics[scale=0.28]{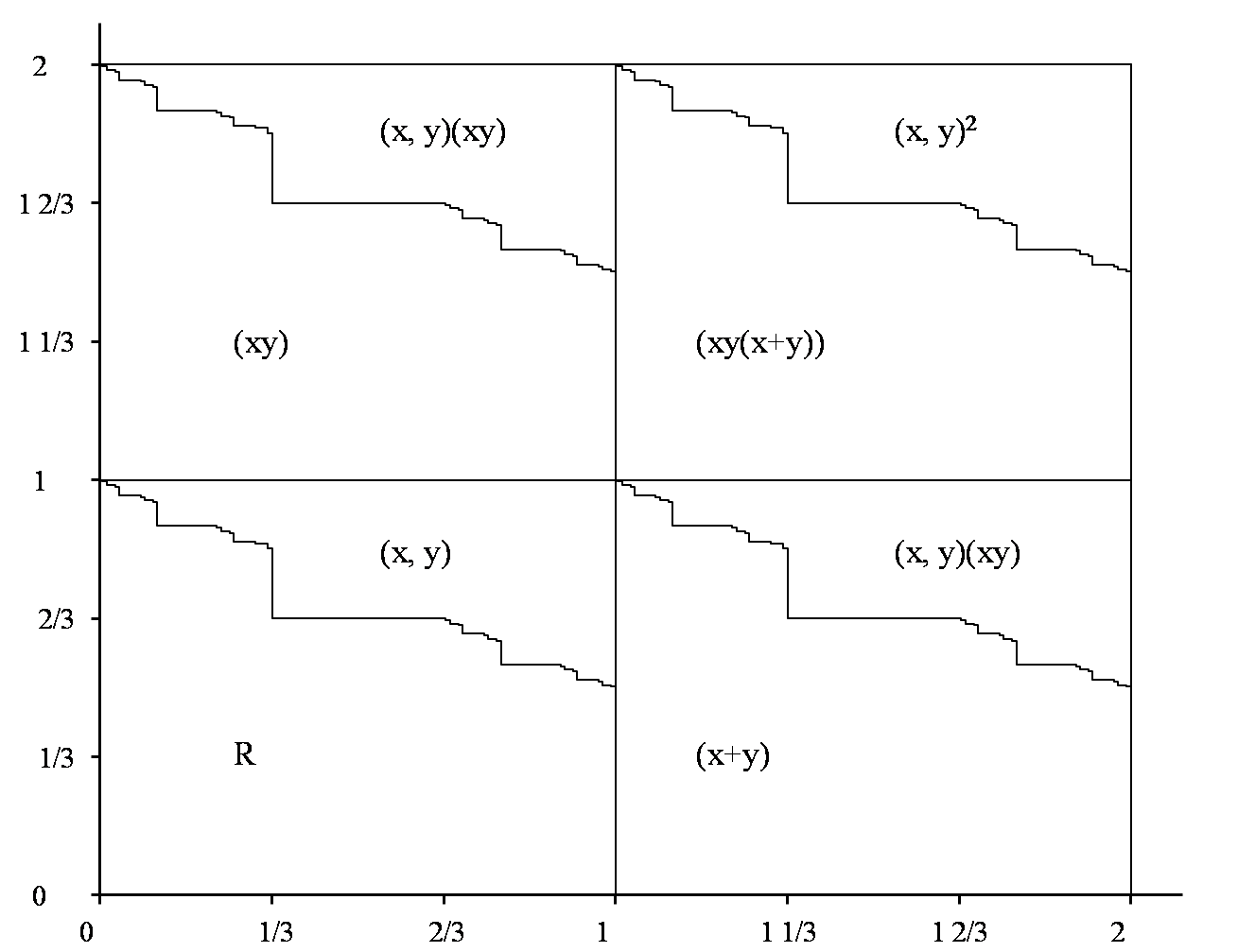}

\end{center}
\begin{rem}
We choose the name Devil's Staircase for this example, because the
resemblance to the Devil's Staircases or Cantor functions that appear
in the basic courses of analysis. \end{rem}
\begin{example}
In a similar way, it can be shown that for any characteristic $p$
the same polynomials give a staircase that has infinitely many steps.
Indeed, 
\[
\tau(f^{(\frac{1}{p^{k}},1-\frac{1}{p^{k}})})=((x+y)(xy)^{p^{k}-1})^{[\frac{1}{p^{k}}]}=(x,y)
\]

but 
\[
\tau(f^{(\frac{2}{p^{k}},1-\frac{2}{p^{k}})})=((x+y)^{2}(xy)^{p^{k}-2})^{[\frac{1}{p^{k}}]}=R
\]
and so we have many different points in the line $x+2y=2$ with test
ideal equal to $(x,y)$ and infinitely many with test ideal equal
to $R$. Therefore we can not expect that there are characteristics
for which the region given by the test ideals will be the same as
the one given by the multiplier ideals\end{example}

\address{Department of Mathematics, University of Michigan, 530 Church Street,
Ann Arbor, MI 48109, USA}

\medskip{}

\email{\textit{E-mail address: }juanfp@umich.edu}

\begin{thebibliography}{BMS2}
\bibitem[BMS]{BMS}M. Blickle, M. Musta\c{t}\u{a}, K. Smith, Discreteness
and rationality of $F-\mbox{thresholds}$, $\textit{Michigan Math. J.}$
$\boldsymbol{57}$ (2008), 43--61. 

\bibitem[BMS2]{BMS2}Blickle, Manuel; Musta\c{t}\u{a}, Mircea; Smith,
Karen E. F-thresholds of hypersurfaces. \textit{Trans. Amer. Math.
Soc}. 361 (2009), no. 12, 6549--6565.

\bibitem[E]{E}E. Lucas, Theorie des Fonctions Numeriques Simplement
Periodiques, \textit{Amer. J. Math }\textbf{1} (1878), no. 3, 197--240.

\bibitem[ELSV]{ELSV}L. Ein, R. Lazarfeld, K.E. Smith, and D. Varolin,
Jumping coefficients of multiplier ideals, \textit{Duke Math. J.}
\textbf{123} (2004), 469--506. 

\bibitem[HH]{HH}C. Huneke and M. Hochster, Tight closure, invariant
theory and the Briançon- Skoda theorem,\textit{ J. Amer. Math. Soc.}
\textbf{3} (1990), 31--116.

\bibitem[HY]{HY}N. Hara and K. -i. Yoshida, A generalization of tight
closure and multiplier ideals, $\textit{Trans. Amer. Math. Soc. }$$\boldsymbol{355}$
(2003), 3143--3174. 1,2,6,8,9,10,11,13,15.

\bibitem[Laz]{Laz}R. Lazarfeld: $\textit{Positivity in Algebraic Geometry}$
II. Ergebnise der Mathematik und ihrer Grenzegebiete. 3. Folge, Vol.
49, Springer-Verlag, Berlin, 2004. 

\bibitem[MT]{MT}P. Monsky and P. Teixeira, $p$-Fractals and power
series-I Some 2 variable results, \textit{Journal of Algebra} \textbf{280}
(2004), 505--536.

\bibitem[MTW]{MTW} M. Musta\c{t}\u{a}, S. Takagi and K.-i. Watanabe,
$F-$thresholds and Bernstein-Sato polynomials, European Congress
of Mathematics, 341--364, Eur. Math. Soc., Zürich, 2005. 1, 2, 3,
13.

\end{thebibliography}
\end{document}